\documentclass[10pt]{article}

\usepackage{latexsym,amsfonts,amsmath,amsthm,amssymb, epsfig}
\usepackage[a4paper,total={170mm,250mm},vcentering,dvips]{geometry}
\usepackage{graphicx}
\usepackage{mathrsfs}
\usepackage[normalem]{ulem}
\usepackage[final]{changes}

\usepackage{yhmath}
\usepackage{dsfont}
\usepackage{enumitem}
\usepackage{csquotes}
\usepackage{bm}
\usepackage[pdftex,dvipsnames]{xcolor}
\usepackage{float}
\usepackage{caption}
\usepackage{parskip}
\usepackage{algpseudocode}
\usepackage{algorithm}
\usepackage{subcaption}

\usepackage[hidelinks]{hyperref}
\usepackage[noabbrev,capitalize]{cleveref}
\usepackage{tikz}
\usetikzlibrary{positioning}

\renewcommand{\autoref}{\cref}

\newtheorem{thm}{Theorem}[section]
\newtheorem{cor}[thm]{Corollary}

\newcommand{\rn}{\mathbb{R}^d}

\newcommand{\nat}{\mathbb{N}}
\newcommand{\whole}[1]{\ensuremath\lfloor #1 \rfloor}

\newcommand{\bit}{\begin{itemize}}
\newcommand{\eit}{\end{itemize}}

\providecommand{\U}[1]{\protect\rule{.1in}{.1in}}
\numberwithin{equation}{section}
\newtheorem{theorem}{Theorem}[section]

\newtheorem{corollary}[theorem]{Corollary}

\newtheorem{definition}[theorem]{Definition}
\newtheorem{lemma}[theorem]{Lemma}

\newtheorem*{proposition*}{Proposition}
\newtheorem{remark}[theorem]{Remark}
\newtheorem{example}[theorem]{Example}

\newtheorem{defn}[theorem]{Definition}
\newtheorem{prop}[theorem]{Proposition}

\newcommand{\bpr}{\begin{prop}}
\newcommand{\beq}{\begin{equation}}
\newcommand{\bd}{\begin{defn}}
\newcommand{\bex}{\begin{example}}
\newcommand{\bc}{\begin{cor}}
\newcommand{\bl}{\begin{lemma}}
\newcommand{\bt}{\begin{theorem}}
\newcommand{\br}{\begin{remark}}
\newcommand{\epr}{\end{prop}}
\newcommand{\eeq}{\end{equation}}
\newcommand{\ed}{\end{defn}}
\newcommand{\eex}{\end{example}}
\newcommand{\ec}{\end{cor}}
\newcommand{\el}{\end{lemma}}
\newcommand{\et}{\end{theorem}}
\newcommand{\er}{\end{remark}}

\begin{document}

\title{Fractional gradients and nonlocal PDEs in variable exponent spaces}
\author{
Susana Moura\footnote{Departmento de Matem\'atica, Universidade de Coimbra, Apartado 3008,
3001-454 Coimbra,  Portugal.
Email: \href{susana.moura@uc.pt}{susana.moura@uc.pt}} \  and
Cornelia Schneider\footnote{Friedrich-Alexander-Universit\"at Erlangen-N\"urnberg, Department of Mathematics, Cauerstr. 11, 91058 Erlangen, Germany.
Email: \href{mailto:cornelia.schneider@fau.de}{cornelia.schneider@fau.de}}.
}
\maketitle

\vskip1.5cm

\begin{abstract}
\noindent We develop a theory of distributional Riesz fractional gradients in variable
exponent Lebesgue spaces and introduce the associated nonlocal Sobolev spaces. Moreover, Sobolev, Hardy, Morrey, and Trudinger-type inequalities are established, and a
fractional fundamental theorem of calculus is derived. As an application, we study nonlinear
variational problems with nonstandard growth and establish existence and
Euler--Lagrange equations for minimizers.\\[0.1cm]
\noindent{\em Key Words:}  Fractional gradient,
Variable exponent spaces,
Fractional Sobolev inequalities,
Fractional PDEs,
Bessel potential spaces, Calculus of variations.\\
{\em 2020 Mathematics Subject Classification:
Primary 26A33, 46E35;
Secondary 35R11, 35J92, 49J40.}
\end{abstract}

\section{Introduction}

Many phenomena in science and engineering involve interactions across multiple
length scales and cannot be adequately described by purely local models. This
has led to the development of nonlocal theories involving integro-differential
operators, which have found applications in materials science, diffusion
processes, image processing, and machine learning.
One important motivation for the present work comes from
\emph{peridynamics}, a nonlocal theory of continuum mechanics introduced by
Silling~\cite{Sil00}; see also the monographs
\cite{BFGS17,DEDG19,Ger16,MO14}. In contrast to classical elasticity, where
the behavior of a material is determined by local stress--strain relations,
peridynamics models the deformation of solids through interactions acting over
a finite horizon. This nonlocal description naturally accommodates
discontinuities such as cracks and therefore provides a powerful framework for
fracture mechanics and heterogeneous materials. While the classical
bond-based formulation of peridynamics, based on double-integral energies, is
among the most widely used nonlocal models, it does not fully capture the
structure of nonlinear elasticity. In particular, it was shown in
\cite{BCMC20,MS24} that this formulation is incompatible with nonlinear
elasticity in the nonlocal-to-local limit.
To overcome this limitation, Shieh and
Spector~\cite{SS15,SS18} introduced a new class of nonlocal models in which
the classical gradient is replaced by the \emph{Riesz fractional gradient}.
For sufficiently smooth functions
\(u:\mathbb{R}^d\to\mathbb{R}\), it is defined by
\[
\nabla^s u(x)
=
c_{d,s}
\int_{\mathbb{R}^d}
\frac{u(x)-u(y)}{|x-y|^{d+s}}
\frac{x-y}{|x-y|}
\,dy,
\qquad s\in(0,1).
\]
A remarkable feature of this operator is that it admits a distributional
formulation via duality, leading to Sobolev-type spaces that are equivalent to
the Bessel potential spaces \(L^{s,p}(\mathbb{R}^d)\). This provides a natural
framework for the analysis of nonlocal variational problems and fractional
partial differential equations; see, for example,
\cite{BCMC20b,CS19,KS22,Pon16,SS15,SS18}. \\
The aim of the present paper is to establish a theory of Riesz fractional
gradients in the setting of variable exponent Lebesgue spaces. Variable
integrability arises naturally in the modeling of heterogeneous materials,
whose mechanical response may vary from point to point. For instance, the
constitutive exponent may depend on a local damage variable according to
\[
p(x)=p_0-\alpha d(x),
\]
where \(d(x)\) describes the degree of material degradation. In undamaged
regions ($d(x)\approx 0$), the material behaves according to the exponent \(p_0\), whereas in
damaged regions the reduced exponent models a softer nonlinear response. This
naturally leads to fractional Sobolev spaces with variable integrability,
providing a flexible framework for nonlocal models with spatially
heterogeneous constitutive laws. \\
Fractional Sobolev spaces based on the Riesz fractional gradient have recently
been studied in generalized Orlicz spaces $L^A$ \cite{Campos24}, where they were
also identified with generalized Bessel--Orlicz potential spaces and
characterized as complex interpolation spaces \cite{CGS26}; see also
\cite{GS25} for the weighted case and \cite{OS26} for Bessel potential spaces
in classical Orlicz spaces. Variable exponent spaces correspond to the special
case $A(x,t)=t^{p(x)}$ (cf.\ \cite[Example~1]{Campos24}), so that some of our
results, in particular Theorems~\ref{thm-char-bessel} and~\ref{fractionalSI},
are contained in this more general theory. The variable exponent setting,
however, is considerably simpler and more explicit: the only structural
assumption is the log-H\"older continuity of $1/p(\cdot)$, and all exponents
are given pointwise in terms of $p(x)$. 
This
allows us to go beyond \cite{Campos24,CGS26}: we prove a fractional Hardy
inequality, Morrey- and Trudinger-type inequalities in the regimes $sp^->d$ and
$sp^-=d$, and existence of minimizers and Euler--Lagrange equations for general
convex integrands of $p(x)$-growth with nonhomogeneous exterior data. Existence results for linear problems with nonhomogeneous exterior data in weighted spaces were obtained in \cite[Thm.~5.3]{GS25}, and for quasilinear problems with zero exterior data in generalized Orlicz spaces in \cite[Thm.~6]{Campos24}; our Theorems~\ref{thm-minimizer} and~\ref{thm-Euler-Lagrange} combine nonlinear $p(x)$-growth, general convex integrands and nonhomogeneous exterior data.\\
Working directly in the variable exponent setting requires some care.
In contrast to the classical setting, variable-exponent spaces are naturally
described by modulars rather than norms, which necessitates different
techniques. A particular difficulty arises from the fact that Young's
convolution inequality does not directly extend to this framework, since the
exponent function \(p(\cdot)\) is not invariant under translations; see
\cite[Cor.~3.6.4]{DHHR} and the explanations given there. To overcome this obstacle, we employ a substitute result from
\cite[Lem.~4.6.3]{DHHR}, which applies only to convolutions with certain
classes of mollifiers. Consequently, throughout the analysis we have to verify
carefully that all convolutions satisfy the required assumptions. Furthermore, the treatment of nonlinear variational problems requires a
lower semicontinuity result in the variable-exponent setting. For the
reader's convenience, we include in Theorem~\ref{thm-low-semicont} a
self-contained proof of the weak lower semicontinuity of convex integral
functionals on $L^{p(\cdot)}$, based on Mazur's theorem. The argument is
classical and follows the lines of the corresponding results in Lebesgue
spaces, cf.\ \cite[Ch.~6]{FL07}.\\
We emphasize that the present paper should be viewed as a first step towards
the systematic study of generalized nonlocal fractional gradients and opens
several directions for further research. In particular, the case of
non-radial variable smoothness \(s(x)\) so far remains   unexplored. While
some initial investigations of the fractional Laplacian  and of
fractional gradients with radial variable smoothness have been carried out in
\cite{BMCS25,DaEtAl22}, a fundamental challenge in the non-radial setting is
the absence of an analogue of the Fourier transform providing a corresponding
spectral characterization; see Theorem~\ref{thm-prop-frac-grad}(iii). Another important direction concerns the localization of nonlocal gradients.
A limitation of the Riesz fractional gradient considered here, especially for
applications, is that its computation requires integration over the whole
space. To enable computations on bounded domains, which is essential, for
example, in realistic material models, several finite-horizon variants have
been introduced. These approaches localize the operator either by truncating
the Riesz fractional gradient or by incorporating spatially varying horizons
\(\delta(x)\); see
\cite{BellidoCuetoMoraCorral2023,CuetoKreisbeckSchoenberger2023,
KreisbeckSchoenberger2025,TianDu2017}. We plan to address these questions   in
future work.

This paper is structured as follows.
{Section \ref{sect-3} is devoted to the central functional framework, where we collect  the main preliminaries on variable Lebesgue spaces, define the fractional gradient, study the associated
nonlocal fractional Sobolev spaces with variable integrability,  and establish their principal structural properties.  Afterwards,} Section \ref{sect-4} applies these tools to the calculus of variations and PDEs, proving lower semicontinuity results and analyzing a nonlocal variational problem.

\section{Nonlocal fractional Sobolev spaces on variable Lebesgue spaces}
\label{sect-3}

\subsection{Preliminaries and variable Lebesgue spaces}
\label{sect-2}

We shall adopt the following general notation: $\nat$
denotes the set of all natural numbers, $\nat_0=\mathbb N\cup\{0\}$,
$\rn$, $d\in\nat$, denotes the $d$-dimensional real Euclidean space.
Furthermore, $\mu=|\cdot|$ stands for the Lebesgue measure.
For a
real number $a$, let $a_+:=\max(a,0)$ and let $\whole{a}$ denote its
integer part. For $p\in (0,\infty]$, the number $p'$ is defined by
$1/p':=(1-1/p)_+$ with the convention that $1/\infty=0$. By $c$,
$c_1$, $c_2$, etc. we denote positive constants independent of
appropriate quantities. For two non-negative expressions ({i.e.}
functions or functionals) ${\mathcal  A}$, ${\mathcal  B}$, the
symbol ${\mathcal A}\lesssim {\mathcal  B}$ (or ${\mathcal A}\gtrsim
{\mathcal  B}$) means that $ {\mathcal A}\leq c\, {\mathcal  B}$ (or
$c\,{\mathcal A}\geq {\mathcal B}$). If ${\mathcal  A}\lesssim
{\mathcal  B}$ and ${\mathcal A}\gtrsim{\mathcal  B}$, we write
${\mathcal  A}\sim {\mathcal B}$ and say that ${\mathcal  A}$ and
${\mathcal  B}$ are equivalent. Given two quasi-Banach spaces $X$
and $Y$, we write $X\hookrightarrow Y$ if $X\subset Y$ and the
natural embedding is bounded.

\paragraph{Variable Lebesgue spaces: Definition and basic properties}

Now we give a short survey on variable exponent Lebesgue spaces $L^{p(\cdot)}(\Omega)$, where  $\Omega$ is a  open  subset of $\rn$ with positive measure.
{The class of exponents $\mathcal{P}(\Omega)$ consists of all measurable functions $p:\Omega\rightarrow [1,\infty)$. }
 We denote $p^{+}:=\text{ess-sup}_{x\in \Omega}p(x)$ and $p^{-}:=\text{ess-inf}_{x\in \Omega}p(x)$.
 Moreover, the conjugate exponent $p'$ is defined by $\frac{1}{p(x)}+\frac{1}{p'(x)}=1$, where $1/\infty:=0$.
 The variable exponent Lebesgue space $L^{p(\cdot)}(\Omega)$
consists of all measurable functions $f$ such that for some $\lambda>0$ the modular $\varrho_{L^{p(\cdot)}(\Omega)}(f/\lambda)$ is finite, where
$$
\varrho_{L^{p(\cdot)}(\Omega)}(f):=\int_{\Omega} |f(x)|^{p(x)}\, dx.
$$
The Luxembourg norm of a function $f\in L^{p(\cdot)}(\Omega)$ is given by
$$
\|f\|_{L^{p(\cdot)}(\Omega)}:=\inf\left\{\lambda>0:\varrho_{L^{p(\cdot)}(\Omega)}\left(\frac{f}{\lambda}\right)\leq 1 \right\}.
$$

{
In the case of non-constant exponent $p(\cdot)$ we  require certain regularity conditions on $p(\cdot)\in \mathcal{P}(\Omega)$ in order to
 get similar results  as in the constant case, e.g., in order to establish  the boundedness of the maximal operator
 in $L^{p(\cdot)}(\Omega)$.}\\

\begin{definition}\label{log-holder}
Let $g\in C(\Omega)$. We say that $g$ is locally log-H\"older continuous, abbreviated $g\in C_{\mathrm{loc}}^{\log}(\Omega)$, if there exists $c_{\log}>0$ such that
$$
|g(x)-g(y)|\leq \frac{c_{\log}}{\log(e+1/|x-y|)} \quad \text{for all}\;\; x,y\in\Omega.
$$
We say that $g$ is globally log-H\"older continuous, abbreviated $g\in C^{\log}(\Omega)$,
if $g$ is locally log-H\"older continuous and there exists $g_{\infty}\in\mathbb{R}$ such that
$$
|g(x)-g_{\infty}|\leq \frac{c_{\log}}{\log(e+|x|)} \quad \text{for all}\;\; x\in\Omega.
$$
\end{definition}

We use the notation $p(\cdot)\in \mathcal{P}^{\log}(\Omega)$ to mean that $p(\cdot)\in \mathcal{P}(\Omega)$ and $1/p(\cdot)\in C^{\log}(\Omega)$.
It was proved in \cite{DHHMS09} that the maximal operator is bounded in $L^{p(\cdot)}(\rn)$ provided that
$p(\cdot)\in \mathcal{P}^{\log}(\rn)$ and $1<p^-\leq p^+\leq \infty$.

\subsection{The fractional gradient and its properties}

 For $0<\alpha<d$ and $u\in\mathcal{S}(\mathbb{R}^d)$, the
\emph{Riesz potentials} of order $\alpha$ are defined  by the formula
\begin{equation}
I_\alpha u:=I_\alpha*u, \qquad
 I_\alpha(x):=
\frac{\gamma(d,\alpha)}{|x|^{d-\alpha}},
\end{equation}
with  constant
\(
\gamma(d,\alpha):=
\frac{\Gamma\!\left(\frac{d-\alpha}{2}\right)}
{\pi^{d/2}2^\alpha
\Gamma\!\left(\frac{\alpha}{2}\right)},
\)
{ where $\Gamma$ denotes the gamma function. This normalizing constant is chosen so  that  $\widehat{I_{\alpha}}(\xi)=(2\pi |\xi|)^{-\alpha}$ in $\mathcal{S}'(\rn)$. Here, the Fourier transform is given by
$$
\widehat{f}(x)= \int_{\rn} e^{-2\pi i x\cdot y} f(y)\, dy, \qquad f\in
\mathcal{S}(\rn),$$
and extended by duality to $\mathcal{S}'(\rn)$.}
Regarding { the Riesz} potentials, among their fundamental properties are the
semigroup property
\begin{equation}
I_\alpha I_\beta u
=
I_{\alpha+\beta}u,
\qquad
\alpha,\beta>0,\quad
\alpha+\beta<d,
\end{equation}
and the fact that the Laplacian maps a potential of order
$\alpha+2$ to one of order $\alpha$,
\begin{equation}
-\Delta I_{\alpha+2}u
=
I_\alpha u.
\end{equation}

Through analytic continuation, the Riesz potential can be extended to
negative exponents (see Chapter~I, p.~41 in \cite{Landkof1973}), which leads to the representation of the fractional Laplacian,
\begin{equation}
(-\Delta)^s u
:=
I_{-2s}u.
\end{equation}

In analogy to \cite{SS15}, we now define the fractional Riesz gradient with the help of these Riesz potentials.

\begin{definition}[Fractional gradient]
Let $s\in(0,1)$. If $u\in L^{p(\cdot)}(\mathbb{R}^d)$ for some $p\in \mathcal{P}^{\log}(\rn)$ with $1<p^-\leq p^+< \infty$ is such that $I_{1-s}*u$   is a well-defined locally integrable function, we define the distributional Riesz fractional gradient
\[
(\nabla^s u)_j := \frac{\partial^s u}{\partial x_j^s},
\qquad
j=1,\ldots,d,
\]
where
\begin{equation}
\frac{\partial^s u}{\partial x_j^s}
:=
\frac{\partial}{\partial x_j} I_{1-s}u,
\end{equation}
in the sense that
\begin{align*}
\left\langle
\frac{\partial^s u}{\partial x_j^s},
v
\right\rangle
&=
(-1)
\left\langle
I_{1-s}u,
\frac{\partial v}{\partial x_j}
\right\rangle
=
-\int_{\mathbb{R}^d}
\bigl(I_{1-s}*u\bigr)
\frac{\partial v}{\partial x_j}
\,dx,
\end{align*}
for every $v\in C_0^\infty(\mathbb{R}^d)$.
\end{definition}

\begin{remark}\label{rem:bd-Riesz} Let us shed some light on  what it  means for $I_{1-s}\ast u$ to be well defined in the setting of variable Lebesgue spaces.
The $L^{p(\cdot)}(\rn)\rightarrow L^{q(\cdot)}(\rn)$ boundedness of $I_{\alpha}$  was first considered in \cite{Sam98}. For the case of the whole
space $\rn$ the following statement was proven in \cite{CCUF07} and can also be found in \cite[Thm.~2.2]{AS06} and \cite[Thm.~6.1.9]{DHHR}.
Let \(0<\alpha<d\) and let \(p(\cdot)\in \mathcal{P}^{\log}(\rn)\) such that
\(
1<p^- \le p^+ < \frac{d}{\alpha}.
\)
Then there exists a constant \(c>0\) such that
\begin{equation}\label{eq:bd-Riesz}
\|{I}_{\alpha}f\|_{L^{q(\cdot)}(\mathbb{R}^d)}
\le
c\,\|f\|_{L^{p(\cdot)}(\mathbb{R}^d)},
\qquad
f\in L^{p(\cdot)}(\mathbb{R}^d),
\end{equation}
where \(q(\cdot)\) is  given by
\(
\frac{1}{q(x)}
=
\frac{1}{p(x)}
-
\frac{\alpha}{d},
\
x\in\mathbb{R}^d.
\)
In particular, under the hypothesis $p^+<\tfrac{d}{1-s}$, \eqref{eq:bd-Riesz} shows that $I_{1-s}u\in L^{q(\cdot)}(\mathbb R^d)$ with $q^->p^->1$, which in particular guarantees that $I_{1-s}u$ is locally integrable, as required in the definition above.
\end{remark}

In \cite{SS15} the following properties analogous to the gradient were shown.

\begin{theorem}[Basic properties of the Riesz fractional gradient]\label{thm-prop-frac-grad}
Let $s\in(0,1)$. Then the Riesz fractional gradient satisfies the following properties.

\begin{itemize}
    \item[(i)] If $u\in C_0^\infty(\mathbb{R}^d)$, then
    \[
        \nabla^s u = I_{1-s}*\nabla u.
    \]

    \item[(ii)] If $u\in C_0^\infty(\mathbb{R}^d)$, then the fractional Laplacian admits the representation
    \[
        (-\Delta)^s u
        =
        -\sum_{j=1}^d
        \frac{\partial^s}{\partial x_j^s}
        \frac{\partial^s}{\partial x_j^s}u.
    \]

    \item[(iii)] If $u\in\mathcal{S}(\mathbb{R}^d)$, then the Fourier transform of the fractional partial derivatives is given by
    \[
        \widehat{\frac{\partial^s u}{\partial x_j^s}}(\xi)
        =
        (2\pi)^s i\xi_j |\xi|^{-1+s}\widehat{u}(\xi).
    \]
\end{itemize}
\end{theorem}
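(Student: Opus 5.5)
We prove the three items in the order (i), (iii), (ii), since (ii) follows most cleanly from the Fourier representation in (iii). Throughout, $u\in C_0^\infty(\mathbb{R}^d)$ or $u\in\mathcal{S}(\mathbb{R}^d)$. In that case $I_{1-s}*u$ is a smooth function that decays like $|x|^{-(d-1+s)}$ at infinity, because $I_{1-s}$ is locally integrable. All manipulations below are therefore classical, and the distributional definition agrees with the pointwise one.

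\emph{Item (i).} Since $I_{1-s}\in L^1_{\mathrm{loc}}$ and $u$ has compact support, we may differentiate under the integral sign in $x\mapsto\int I_{1-s}(y)u(x-y)\,dy$. Dominated convergence applies with majorant $|I_{1-s}(y)|\,\|\partial_j u\|_\infty\chi_{K-x}(y)$, which is uniform for $x$ in compact sets. This gives $\partial_j(I_{1-s}*u)=I_{1-s}*\partial_j u$ pointwise. Equivalently, we can move the derivative from the test function $v$ onto $u$ in the duality pairing by Fubini and integration by parts. Either way, the distributional derivative in the definition coincides with $I_{1-s}*\nabla u$.

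\emph{Item (iii).} By the normalization of $\gamma(d,\alpha)$ we have $\widehat{I_{1-s}}(\xi)=(2\pi|\xi|)^{-(1-s)}$ in $\mathcal S'$. This is a locally integrable function, since $1-s<d$. For $u\in\mathcal S$, the convolution theorem in $\mathcal S'$ (a tempered distribution convolved with a Schwartz function) gives $\widehat{I_{1-s}*u}=(2\pi|\xi|)^{s-1}\hat u$. Differentiation multiplies the Fourier transform by $2\pi i\xi_j$, so
\[
\widehat{\partial^s_j u}(\xi)=2\pi i\xi_j(2\pi|\xi|)^{s-1}\hat u(\xi)=(2\pi)^s i\xi_j|\xi|^{s-1}\hat u(\xi).
\]

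\emph{Item (ii).} Applying (iii) twice would formally give
\[
\widehat{\partial^s_j\partial^s_j u}=(2\pi)^{2s}(i\xi_j)^2|\xi|^{2s-2}\hat u=-(2\pi)^{2s}\xi_j^2|\xi|^{2s-2}\hat u.
\]
Summing over $j$ yields $-(2\pi|\xi|)^{2s}\hat u$, which is exactly $-\widehat{I_{-2s}u}$, since the analytic continuation keeps the symbol $(2\pi|\xi|)^{2s}$. Hence $-\sum_j\partial^s_j\partial^s_j u=(-\Delta)^s u$.

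This step is the main obstacle. The function $\partial^s_j u$ is no longer Schwartz: its symbol $\xi_j|\xi|^{s-1}\hat u$ is not smooth at $0$, and $\partial^s_j u$ decays only like $|x|^{-(d+s)}$. So the second application of $\partial^s_j$ has to be justified directly. To do so, we check three things.
\begin{itemize}
\item First, $\partial^s_ju\in L^1\cap L^\infty$. This follows from (i) together with the decay of $I_{1-s}*\partial_j u$, which is one order better because $\int\partial_j u=0$.
\item Second, $I_{1-s}*\partial^s_j u$ is a well-defined locally integrable function whose Fourier transform equals the product of the two symbols. The product $|\xi|^{s-1}\cdot\xi_j|\xi|^{s-1}\hat u$ is locally integrable, so the identity can be verified by pairing with test functions and using Fubini, or by approximating $I_{1-s}$ with truncated kernels.
\item Third, the derivative then contributes another factor $2\pi i\xi_j$.
\end{itemize}
If this becomes delicate, we fall back on the pointwise representation from \cite{SS15}: write $\partial^s_j u$ as the singular integral $c_{d,s}\int\frac{(u(x)-u(y))(x_j-y_j)}{|x-y|^{d+s+1}}dy$ and compare with the hypersingular integral for $(-\Delta)^s$. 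The Fourier route is shorter, however, and we will take it.
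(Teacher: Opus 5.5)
Your proposal is correct, and for (iii) it is exactly the paper's proof. The paper proves only this item, by the same computation $\widehat{\partial_j(I_{1-s}*u)}=2\pi i\xi_j\,\widehat{I_{1-s}}\,\widehat u=(2\pi)^s i\xi_j|\xi|^{s-1}\widehat u$, in order to correct a sign in \cite{SS15}. For (i) and (ii) it simply refers to \cite{SS15} without argument. Your treatment of (i) and (ii) is therefore a self-contained replacement for that citation rather than an alternative route, and it holds up. In (i), the dominating function should be supported on a compact set independent of the increment $h$; for $|h|\le 1$, use $\overline{B(x,1)}-K$ in place of $x-K$. This is a cosmetic fix. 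In (ii) you identify the point the citation hides, namely that $w:=\partial^s_j u$ is not a Schwartz function, and your checks close it. First, $w\in L^1\cap L^\infty$ by the cancellation $\int\partial_j u=0$. Second, splitting the kernel at $|x|=1$ shows $I_{1-s}*w\in L^\infty$, so it is tempered. Third, for $\phi\in\mathcal S$, apply Fubini twice: first using $w\in L^1$ and the boundedness of $|I_{1-s}|*|\widehat\phi|$, then using $|\xi|^{s-1}\phi\in L^1$. Together with (iii) applied to $\widehat\phi$, this gives $\langle I_{1-s}*w,\widehat\phi\rangle=\int_{\mathbb R^d}(2\pi|\xi|)^{s-1}\widehat w(\xi)\phi(\xi)\,d\xi$, which is the product formula you need. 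A side benefit of your Fourier route is that (ii) is immune to the sign issue the paper flags, since the symbol $i\xi_j|\xi|^{s-1}$ enters squared. The only input beyond (iii) is that $I_{-2s}$ has symbol $(2\pi|\xi|)^{2s}$, which the paper likewise takes for granted from the analytic continuation.
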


\begin{proof} We provide a proof of (iii) since there appears to be a sign inconsistency  in \cite[Thm.~1.4]{SS15}. We compute
\begin{align*}
\widehat{\frac{\partial^s u}{\partial x_j^s}}(\xi)
=
\widehat{\frac{\partial}{\partial x_j}(I_{1-s}u)}(\xi)
=
2\pi i\xi_j\,\widehat{I_{1-s}* u}(\xi)
=
2\pi i\xi_j\,(2\pi)^{s-1}|\xi|^{s-1}\widehat{u}(\xi)
=
i(2\pi)^s|\xi|^{s-1}\xi_j\,\widehat{u}(\xi).
\end{align*}
\end{proof}

\begin{remark}
The first identity provides a geometric interpretation of the fractional gradient. While the classical gradient is the locally linear approximation of a function, the fractional gradient can be viewed as an \emph{averaged} linear approximation, incorporating nonlocal information through the Riesz potential. The second identity establishes the natural connection between the fractional gradient and the fractional Laplacian. It is the nonlocal analogue of the classical identity
\[
    -\Delta u
    =
    -\sum_{j=1}^d
    \frac{\partial^2u}{\partial x_j^2},
\]
showing that the fractional Laplacian can be recovered as the sum of the fractional second-order derivatives. Finally, the Fourier representation provides the spectral characterization of the fractional gradient. Together with the previous identities, it links the geometric and spectral viewpoints.
\end{remark}

\subsection{Nonlocal Sobolev spaces and Bessel potential spaces with variable integrability}

The above observations motivate the introduction of nonlocal fractional Sobolev spaces as the natural energy spaces for nonlocal variational problems and fractional PDEs.

\begin{definition} \label{def-Xsp}
Let $p(\cdot)\in\mathcal{P}^{\log}(\mathbb{R}^d)$ and let $s\in(0,1)$. For
$u\in C_0^\infty(\mathbb{R}^d)$ we define the norm
\[
    \|u\|_{X^{s,p(\cdot)}(\mathbb{R}^d)}
    :=
    \|u\|_{L^{p(\cdot)}(\mathbb{R}^d)}
    +
    \| \nabla^s u\|_{L^{p(\cdot)}(\mathbb{R}^d;\mathbb{R}^d)},
\]
where the latter norm is understood as the  norm  in $L^{p(\cdot)}(\mathbb{R}^d)$
of the Euclidean norm of $\nabla^s u$, i.e., $|\nabla^s u|=(\sum_{j=1}^d|\partial^s_{x_j}u|^2)^{1/2}$.
The space $X^{s,p(\cdot)}(\mathbb{R}^d)$ is then defined by
\[ X^{s,p(\cdot)}(\mathbb{R}^d)
    :=
    \overline{C_0^\infty(\mathbb{R}^d)}
    ^{\|\cdot\|_{X^{s,p(\cdot)}(\mathbb{R}^d)}},
\]
the closure of $C_0^\infty(\mathbb{R}^d)$ with respect to the norm of $X^{s,p(\cdot)}(\mathbb{R}^d)$.
\end{definition}

\begin{remark}
We restrict our attention to the case $s\in(0,1)$, although the construction extends naturally to arbitrary orders $s\ge0$. Writing
\(
    \tilde s=s-\lfloor s\rfloor,
\)
where $\lfloor s\rfloor$ denotes the integer part of $s$, one may define
\[
    \|u\|_{X^{s,p(\cdot)}(\mathbb{R}^d)}
    :=
    \|u\|_{W^{\lfloor s\rfloor,p(\cdot)}(\mathbb{R}^d)}
    +
    \|\nabla^{\tilde s}D^{\lfloor s\rfloor}u\|_{L^{p(\cdot)}(\mathbb{R}^d;{\rn})},
\]
where $W^{\lfloor s\rfloor,p(\cdot)}(\mathbb{R}^d)$ stands for the Sobolev space of order $\lfloor s\rfloor$ with variable exponent $p(\cdot)$.
\end{remark}

As an important tool  we derive  a fractional version of the fundamental theorem of calculus, which we use in the sequel for proving several results.

\begin{theorem}[Fractional FTOC]\label{thm-frac-FTOC}
Let $0<s<1$, then  every \( u \in C_0^{\infty}(\mathbb{R}^d) \) is expressible as
\[
u = I_s \sum_{j=1}^d \mathcal{R}_j \frac{\partial^s u}{\partial x_j^s},
\]
where \( \mathcal{R}_j \) is the Riesz transform, which can be characterized as a singular integral or \( 0 \)-order operator with symbol \( \frac{-i\xi_j}{|\xi|} \), i.e.,  $\widehat{\mathcal{R}_jf} =\frac{-i\xi_j}{|\xi|}\hat{f}$ for $f\in \mathcal{S}(\rn)$.
\end{theorem}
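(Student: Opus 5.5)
The plan is to verify the identity on the Fourier side, using the symbol computed in Theorem~\ref{thm-prop-frac-grad}(iii), and then to justify that all operators involved act on the functions in question.

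\textbf{Symbol computation.} Since $C_0^\infty(\mathbb{R}^d)\subset\mathcal{S}(\mathbb{R}^d)$, Theorem~\ref{thm-prop-frac-grad}(iii) gives
\[
\widehat{\tfrac{\partial^s u}{\partial x_j^s}}(\xi)=(2\pi)^s i\xi_j|\xi|^{s-1}\widehat u(\xi).
\]
Applying the symbol $-i\xi_j/|\xi|$ of $\mathcal{R}_j$ and summing over $j$ yields
\[
\sum_{j=1}^d \frac{-i\xi_j}{|\xi|}\,(2\pi)^s i\xi_j|\xi|^{s-1}\widehat u(\xi)
=(2\pi)^s|\xi|^{s-2}\sum_{j=1}^d\xi_j^2\,\widehat u(\xi)
=(2\pi|\xi|)^s\,\widehat u(\xi).
\]
Multiplying by $\widehat{I_s}(\xi)=(2\pi|\xi|)^{-s}$ then returns $\widehat u$. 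This is the formal core of the proof.

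\textbf{Justification.} The functions $g_j:=\frac{\partial^s u}{\partial x_j^s}$ are not Schwartz, because their symbols are singular at $\xi=0$. So I would avoid an abstract multiplier composition and instead check the identity pointwise as an identity of tempered distributions.

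1. By Theorem~\ref{thm-prop-frac-grad}(i), $g_j=I_{1-s}*\partial_j u$ is smooth and decays like $|x|^{-(d+1-s)}$. The decay follows because $\partial_j u$ has compact support and zero mean, so the leading term of the kernel expansion cancels. Hence $g_j\in L^1\cap L^\infty$, and in particular $g_j\in L^2$.

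2. Define $\mathcal{R}_j g_j$ as an $L^2$ multiplier. Then $h:=\sum_j\mathcal{R}_j g_j$ lies in $L^2$, and $\widehat h(\xi)=(2\pi|\xi|)^s\widehat u(\xi)$. Alternatively, $h=(-\Delta)^{s/2}u$, which is smooth and decays like $|x|^{-d-s}$, so $h\in L^{r}$ for every $r\ge 1$.

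3. Since $h\in L^r$ for some $r<d/s$, the integral $I_s*h$ converges absolutely. Its Fourier transform, as a tempered distribution, equals $(2\pi|\xi|)^{-s}\widehat h=\widehat u$. To make this rigorous, I would pair $I_s*h$ with a test function $\varphi\in\mathcal{S}$ and use Fubini together with Parseval. In particular, $\widehat{I_s*\varphi}=(2\pi|\xi|)^{-s}\widehat\varphi$ is locally integrable, since $s<d$. This gives $\langle I_s*h,\varphi\rangle=\langle h,I_s*\varphi\rangle$, and the $\xi$-integrals converge because $(2\pi|\xi|)^{s}\widehat u\cdot(2\pi|\xi|)^{-s}\overline{\widehat\varphi}=\widehat u\,\overline{\widehat\varphi}$ is integrable.

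4. Fourier injectivity on $\mathcal{S}'$ then gives $I_s h=u$. Both sides are continuous functions, so the identity holds pointwise.

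\textbf{Main obstacle.} I expect the hardest part to be this justification: establishing the decay of $(-\Delta)^{s/2}u$ and $g_j$, and legitimizing the Parseval pairing where $|\xi|^{\pm s}$ is singular at the origin. If the decay estimate proves delicate, I would fall back on the following. Note that $\widehat h$ is bounded and decays rapidly at infinity, while $(2\pi|\xi|)^{-s}\widehat h=\widehat u\in\mathcal{S}$. One can then invoke the known identity $I_s(-\Delta)^{s/2}u=u$ for Schwartz $u$, which is the semigroup property extended via analytic continuation, as referenced after the definition of the Riesz potential.
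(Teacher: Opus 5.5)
Your proposal is correct and follows the paper's route: the paper's proof is only your symbol computation (Theorem~\ref{thm-prop-frac-grad}(iii), the Riesz symbol $-i\xi_j/|\xi|$, and $\widehat{I_s}=(2\pi|\xi|)^{-s}$ in $\mathcal{S}'(\mathbb{R}^d)$), and it omits the $L^2$ and Parseval justification that you add. One harmless slip: since $I_{1-s}(x)\sim|x|^{-(d-1+s)}$, the zero-mean cancellation gives $g_j=O(|x|^{-(d+s)})$ rather than $O(|x|^{-(d+1-s)})$, and this still yields $g_j\in L^1\cap L^\infty$.
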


\begin{proof}
 Let $u \in C_0^{\infty}(\mathbb{R}^d)$.  Since $u\in \mathcal{S}(\rn)$,
using
Theorem \ref{thm-prop-frac-grad}(iii), which gives
$
\widehat{\frac{\partial^s u}{\partial x_j^s}} =  (2\pi)^s i \xi_j |\xi|^{-1+s} \hat{u},
\  \text{and} \
\widehat{\mathcal{R}_jf} = -\frac{i \xi_j}{|\xi|}\hat{f}
$,
the result follows from the identity
\(
\hat{I}_s = (2\pi |\xi|)^{-s} \ \text{in } \mathcal{S}^\prime(\mathbb{R}^d)
\),  as
\[
\hat{u}=\hat{I}_s \sum_{j=1}^d -\frac{i \xi_j}{|\xi|}\widehat{\frac{\partial^s u}{\partial x_j^s}}.
\]
\end{proof}

Having introduced { the space $X^{s,p(\cdot)}(\mathbb{R}^d)$}, it is natural to ask whether it coincides with the corresponding variable-exponent Bessel potential space.
{In the constant exponent setting, the coincidence was established in  \cite[Thm. 1.17]{SS15}.}
An analogous characterization in the variable exponent setting can be shown, cf. Theorem \ref{thm-char-bessel} below.
{We first recall the definition of variable Bessel potential spaces, following \cite{AS06} (see also  \cite{GHN07}). The Bessel kernel $g_s$ of order $s$, $s>0$, is defined by
\[\widehat{g_s}(\xi):=(1+4\pi^2|\xi|^2)^{-s/2}, \qquad \xi\in \rn. \]
It is known that
$$ {g_s}(x)= \frac{1}{(4\pi)^{\frac{s}{2}}\Gamma(\frac{s}{2})} \int_0^\infty e^{-\frac{\pi|x|^2}{\delta}} e^{-\frac{\delta}{4\pi}} \delta^{\frac{s-d}{2}} \, \frac{d\delta}{\delta}, \qquad x\in \rn,
$$
cf. e.g. \cite[7.1, p.~245]{Miz96}.
For $p(\cdot) \in \mathcal P^{\log}(\mathbb R^d)$ with  $1<p^- \leq  p^+<\infty$ and  $s> 0$,  the Bessel potential space with variable exponent $L^{s,p(\cdot)}(\mathbb R^d)$ is defined by
\[ L^{s,p(\cdot)}(\mathbb R^d) :=
\left\{u=g_{s}* f:\, f\in L^{p(\cdot)}(\mathbb R^d)\right\}, \]
endowed with the norm
\[ \|u\|_{L^{s,p(\cdot)}(\mathbb R^d)} = \bigl\| f \bigr\|_{L^{p(\cdot)}(\mathbb R^d)}, \qquad u= g_s *f. \]
}

\begin{theorem}[Characterization via  variable Bessel potential spaces]\label{thm-char-bessel}
Assume that $p(\cdot)\in\mathcal{P}^{\log}(\mathbb{R}^d)$ satisfies
\(
1<p^-\le p^+<\infty\)   and let  $s\in (0,1)$.
Then
\[
    X^{s,p(\cdot)}(\mathbb{R}^d)
    =
    L^{s,p(\cdot)}(\mathbb{R}^d).
\]
\end{theorem}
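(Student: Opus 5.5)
The plan is to prove the equivalence of norms
\[
\|u\|_{X^{s,p(\cdot)}(\mathbb{R}^d)}\sim\|u\|_{L^{s,p(\cdot)}(\mathbb{R}^d)},\qquad u\in C_0^\infty(\mathbb{R}^d),
\]
and then pass to closures. This requires that $C_0^\infty(\mathbb{R}^d)$ be dense in $L^{s,p(\cdot)}(\mathbb{R}^d)$. For $1<p^-\le p^+<\infty$ and $p(\cdot)\in\mathcal{P}^{\log}$ this is known (see \cite{AS06,GHN07}): $C_0^\infty$ is dense in $L^{p(\cdot)}$, and $g_s*$ maps $L^{p(\cdot)}$ continuously into itself, because $g_s$ is radially decreasing and integrable. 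For such kernels the substitute \cite[Lem.~4.6.3]{DHHR} for Young's inequality applies, i.e.\ $|g_s*f|\lesssim Mf$, and $M$ is bounded on $L^{p(\cdot)}$ by \cite{DHHMS09}. Given the norm equivalence on $C_0^\infty$, both spaces are completions of $C_0^\infty$ under equivalent norms, both embed continuously into $L^{p(\cdot)}$, and so they coincide.

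For the norm equivalence I work on the Fourier side with Fourier multipliers. Write $u=g_s*f$ with $f=(1-\Delta)^{s/2}u\in\mathcal{S}$. By Theorem~\ref{thm-prop-frac-grad}(iii),
\[
\widehat{\partial^s_j u}(\xi)=m_j(\xi)\,\widehat f(\xi),\qquad m_j(\xi)=\frac{i(2\pi)^s\xi_j|\xi|^{s-1}}{(1+4\pi^2|\xi|^2)^{s/2}} .
\]
Also $\widehat u=(1+4\pi^2|\xi|^2)^{-s/2}\widehat f$, which gives $\|u\|_{L^{p(\cdot)}}\lesssim\|f\|_{L^{p(\cdot)}}$ as above. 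The multipliers $m_j$ are smooth away from $0$, homogeneous-like at infinity, and satisfy the Mikhlin condition $|\partial^\alpha m_j(\xi)|\lesssim|\xi|^{-|\alpha|}$. Hence they define Calder\'on--Zygmund operators. On $L^{p(\cdot)}$ with log-H\"older exponents such operators are bounded, via the weighted extrapolation result \cite[Ch.~7]{DHHR} or Cruz-Uribe--Fiorenza, since the Mikhlin multipliers are bounded on $L^p(w)$ for all $A_p$ weights. This gives $\|\nabla^s u\|_{L^{p(\cdot)}}\lesssim\|f\|_{L^{p(\cdot)}}$, and therefore $\|u\|_{X}\lesssim\|u\|_{L^{s,p(\cdot)}}$.

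For the reverse inequality I split with a smooth cutoff $\chi$ that equals $1$ near the origin and vanishes for $|\xi|\ge 2$. Write
\[
(1+4\pi^2|\xi|^2)^{s/2}=\chi(\xi)(1+4\pi^2|\xi|^2)^{s/2}+(1-\chi(\xi))(1+4\pi^2|\xi|^2)^{s/2}.
\]
The first piece is a Schwartz multiplier, so $\|\mathcal{F}^{-1}[\chi(1+4\pi^2|\xi|^2)^{s/2}\widehat u]\|_{L^{p(\cdot)}}\lesssim\|u\|_{L^{p(\cdot)}}$, again by \cite[Lem.~4.6.3]{DHHR} applied to the integrable radial majorant of a Schwartz kernel. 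For the second piece I use $\sum_j\xi_j^2/|\xi|^2=1$ to write
\[
(1-\chi)(1+4\pi^2|\xi|^2)^{s/2}\,\widehat u=\sum_j \frac{(1-\chi(\xi))(1+4\pi^2|\xi|^2)^{s/2}(-i\xi_j)}{(2\pi)^s|\xi|^{1+s}}\,\widehat{\partial^s_j u}.
\]
This is the same identity as in the fractional FTOC, Theorem~\ref{thm-frac-FTOC}. Each of these symbols vanishes near $0$ and is a Mikhlin multiplier, so it is bounded on $L^{p(\cdot)}$. Together these give $\|f\|_{L^{p(\cdot)}}\lesssim\|u\|_{L^{p(\cdot)}}+\|\nabla^s u\|_{L^{p(\cdot)}}$.

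The main obstacle is justifying the boundedness of the Mikhlin-type multipliers on $L^{p(\cdot)}$, since the exponent is not translation invariant and Young's inequality is unavailable. I would handle it by extrapolation: Mikhlin multipliers are bounded on $L^{p_0}(w)$ for every $w\in A_{p_0}$, and since $M$ is bounded on $L^{p(\cdot)}$ and on $L^{p'(\cdot)}$, the extrapolation theorem transfers this to $L^{p(\cdot)}$. The Schwartz-kernel pieces instead go through the mollifier lemma, and for those one has to check that a radially decreasing integrable majorant exists.
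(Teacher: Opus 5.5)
Your argument is correct, but it takes a genuinely different route from the paper's.

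\textbf{The paper's route.} It never uses a general multiplier theorem. For $L^{s,p(\cdot)}\subset X^{s,p(\cdot)}$ it takes the finite signed measure $\mu_s$ with $\widehat{\mu}_s(\xi)=(2\pi|\xi|)^s(1+4\pi^2|\xi|^2)^{-s/2}$ (Mizuta, Stein) and writes $\partial^s_{x_j}u=-\mu_s*\mathcal{R}_jf$. The only variable-exponent operator bound it then needs is the $L^{p(\cdot)}$-boundedness of the Riesz transforms. For the converse it sets $f=\lambda_s*(u+\sum_j\mathcal{R}_j\partial^s_{x_j}u)$ with $\widehat{\lambda}_s(\xi)=(1+4\pi^2|\xi|^2)^{s/2}/(1+(2\pi|\xi|)^s)$, and splits $\lambda_s=\delta_0+h_s$. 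The Almeida--Samko bounds $|h_s(x)|\lesssim|x|^{s-d}$ for $|x|<1$ and $|h_s(x)|\lesssim|x|^{-d-s}$ for $|x|\ge1$ give an integrable radially decreasing majorant, so \cite[Lem.~4.6.3]{DHHR} applies. Finally it checks $u=g_s*f$ on the Fourier side and extends by density.

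\textbf{Your route.} You treat the whole symbols $m_j$ and $n_j$ as Mikhlin multipliers, bounded on $L^{p(\cdot)}$ via $A_p$-weighted Calder\'on--Zygmund theory and Rubio de Francia extrapolation. You replace $\lambda_s$ by a smooth frequency cutoff, handling the low frequencies with a Schwartz kernel and the majorant lemma.

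\textbf{What each buys.} The paper keeps the variable-exponent black boxes minimal: Riesz transforms and the majorant lemma. The cost is reliance on specific structural facts about $\mu_s$, $\lambda_s$ and $h_s$. It also tacitly needs convolution with $\mu_s$ to be bounded on $L^{p(\cdot)}$. That is not automatic without Young's inequality; it amounts to knowing that $\mu_s-\delta_0$ has an integrable radially decreasing majorant, which is true, e.g.\ from Stein's expansion of $\mu_s$ in Bessel kernels. Your approach needs no explicit kernel information and sidesteps that point. It is also more robust: it works for any order-zero symbol and on any space where extrapolation is available. The price is heavier machinery than the paper uses.

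\textbf{One detail to fix.} Your justification for the density of $C_0^\infty(\mathbb{R}^d)$ in $L^{s,p(\cdot)}(\mathbb{R}^d)$ is not the right one. Since the norm is isometric in $f$, density of $C_0^\infty$ in $L^{p(\cdot)}$ only shows that $g_s*C_0^\infty\subset\mathcal{S}$ is dense. You must still approximate Schwartz functions by $C_0^\infty$ functions in the $L^{s,p(\cdot)}$-norm. This follows from the continuity of $(1-\Delta)^{s/2}$ on $\mathcal{S}$ together with $\mathcal{S}\hookrightarrow L^{p^-}\cap L^{p^+}\hookrightarrow L^{p(\cdot)}$. The boundedness of $g_s*$ on $L^{p(\cdot)}$ gives the embedding $L^{s,p(\cdot)}\hookrightarrow L^{p(\cdot)}$, not density. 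The paper passes over the same point just as quickly, so this is a matter of completeness rather than a flaw in your approach.
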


\begin{proof}
We first prove the inclusion \( L^{s,p(\cdot)}(\mathbb{R}^d) \subset X^{s,p(\cdot)}(\mathbb{R}^d) \).
Suppose \( u \in L^{s,p(\cdot)}(\mathbb{R}^d) \), which means that \( u = g_s * f \) for some \( f \in L^{p(\cdot)}(\mathbb{R}^d) \).
Then using \cite[Lem.~12.4.3]{DHHR}
(see also  \cite[Thm. 5.1]{AS06})
we have
\begin{equation}\label{help-1}
\|u\|_{L^{p(\cdot)}(\mathbb{R}^d)} \leq c \|g_s\|_{L^1(\mathbb{R}^d)} \|f\|_{L^{p(\cdot)}(\mathbb{R}^d)},
\end{equation}
and thus \( u \in L^{p(\cdot)}(\mathbb{R}^d) \). It therefore remains to show that \( {\nabla^s u } \in L^{p(\cdot)}(\mathbb{R}^d;\mathbb{R}^d) \) and the inclusion will be demonstrated. We begin by assuming \( f \in C_0^\infty(\mathbb{R}^d) \), and then the general case will follow from density. Note that \( f \in C_0^\infty(\mathbb{R}^d) \) and \( u \in L^{p(\cdot)}(\mathbb{R}^d) \) implies that it makes sense to write

\[
\frac{\partial^s u}{\partial x_j^s} = g_s * \frac{\partial^s f}{\partial x_j^s}.
\]
Then, using Theorem \ref{thm-prop-frac-grad}(iii) to compute the Fourier transform, we obtain
\[
\widehat{\frac{\partial^s u}{\partial x_j^s}}(\xi) = (1 + 4\pi^2 |\xi|^2)^{-\frac{s}{2}} ((2\pi)^s i \xi_j |\xi|^{-1+s}) \hat{f}(\xi),
\]
so that
\[
\widehat{\frac{\partial^s u}{\partial x_j^s}} = \frac{i \xi_j}{|\xi|} \frac{(2\pi |\xi|)^s}{(1 + 4\pi^2 |\xi|^2)^{\frac{s}{2}}} \hat{f}.
\]

On the other hand, it follows from \cite[Lem.~2.1, p.~247]{Miz96} (see also \cite[Lem.~2, p.~133]{Stein70}) that there exists a finite signed measure $\mu_s$ such that
\[
\hat{\mu}_s(\xi) = \frac{(2\pi |\xi|)^s}{(1 + 4\pi^2 |\xi|^2)^{\frac{s}{2}}},
\]
and therefore inverting the Fourier transform we have
\[
\frac{\partial^s u}{\partial x_j^s} = { -}
\mu_s * \mathcal{R}_j f,
\]
where \( \mathcal{R}_j \) is the Riesz transform. Now
\begin{equation}\label{help-2}
\|\nabla^s u\|_{L^{p(\cdot)}(\mathbb{R}^d;{\rn})} \leq c{ \max_{j=1,\dots, d}}\|\mathcal{R}_j f\|_{L^{p(\cdot)}(\mathbb{R}^d)} \leq c' \|f\|_{L^{p(\cdot)}(\mathbb{R}^d)},
\end{equation}
where
in the second step we used the
  boundedness of the Riesz transform \( \mathcal{R}_j : L^{p(\cdot)}(\mathbb{R}^d) \to L^{p(\cdot)}(\mathbb{R}^d) \) for \( 1 < p^- \leq p^+ < \infty \), which also is stated in \cite[Lem.~12.4.3]{DHHR}. Now \eqref{help-1} together with \eqref{help-2} implies the desired result  is case of \( f \in C_0^\infty(\mathbb{R}^d) \). Otherwise it follows by density, since \(C_0^\infty(\mathbb{R}^d) \) in dense in \(L^{p(\cdot)}(\mathbb{R}^d)\), cf. e.g \cite[Lem.~4.6.5, p.~130]{DHHR}.

Conversely, suppose \( u \in X^{s,p(\cdot)}(\mathbb{R}^d) \). Define
\[
f := \lambda_s * \left( u + \sum_{j=1}^d \mathcal{R}_j \frac{\partial^s u}{\partial x_j^s} \right),
\]
where \( \lambda_s \) is the finite signed measure for which
\[
(1 + 4\pi^2 |\xi|^2)^{\frac{s}{2}} = \hat{\lambda}_s(\xi) [1 + (2\pi |\xi|)^s],
\]
cf.  \cite[Lem.~2.2, p. 248]{Miz96}.
Moreover, we introduce a function $h_{s}$ via its Fourier transform as follows
\[
\frac{(1 + 4\pi^2 |\xi|^2)^{\frac{s}{2}}}{1 + (2\pi |\xi|)^s}=\hat{\lambda}_s(\xi)=:\hat{h}_s(\xi) +1.
\]
In  particular, this implies $\lambda_s=h_s+\delta_0$, where $\delta_0$ denotes the delta distribution.
It was shown in \cite[Lem.~5.4]{AS06} that $h_s$ admits the following bounds:
\[
|h_s(x)| \leq C
\begin{cases}
|x|^{-(d-s)}, & |x| < 1 \\
|x|^{-(d+s)}, & |x| \geq 1
\end{cases}.
\]
In what follows we will show that \( |h_s|\) has an integrable least bell shaped majorant, i.e., the function  \(\Psi_s(x) := \sup_{|y| \geq |x|} |h_s(y)|\in L^1(\rn)\), so that \cite[Lem.~4.6.3]{DHHR} can be applied w.r.t. $h_s$. For
  \(|x| < 1\),
\[
\Psi_s(x) \leq \max\left\{ \sup_{|x| \leq |y| < 1} C|y|^{-(d-{s})},\, \sup_{|y| \geq 1} C|y|^{-(d+s)} \right\}
= C |x|^{-(d-{s})}.
\]
For \(|x| \geq 1\),
\[
\Psi_s(x) = \sup_{|y| \geq |x|} |h_s(y)| \leq C |x|^{-(d+s)}.
\]
Thus,
\[
\Psi_s(x) \leq C
\begin{cases}
|x|^{-(d-s)}, & |x| < 1, \\
|x|^{-(d+s)}, & |x| \geq 1.
\end{cases}
\]
Using spherical coordinates,
\[
\int_{|x| < 1} |x|^{-(d-{s})} \, dx
= C_d \int_0^1 r^{{s}-1} \, dr < \infty,
\]
and
\[
\int_{|x| \geq 1} |x|^{-(d+s)} \, dx
= C_d \int_1^\infty r^{-(s+1)} \, dr < \infty,
\]
since \(s > 0\). Hence \(\Psi_s \in L^1(\mathbb{R}^n)\).
We compute
\begin{align*}
    \|f\|_{L^{p(\cdot)}(\mathbb R^d)}
    & =\left\|\lambda_s * \left( u + \sum_{j=1}^d \mathcal{R}_j \frac{\partial^s u}{\partial x_j^s} \right)\right\|_{L^{p(\cdot)}(\mathbb R^d)}
     =\left\|(h_s+\delta_0) * \left( u + \sum_{j=1}^d \mathcal{R}_j \frac{\partial^s u}{\partial x_j^s} \right)\right\|_{L^{p(\cdot)}(\mathbb R^d)}\\
     & \le \left\|h_s * \left( u + \sum_{j=1}^d \mathcal{R}_j \frac{\partial^s u}{\partial x_j^s} \right)\right\|_{L^{p(\cdot)}(\mathbb R^d)}+ \left\| \delta_0 * \left( u + \sum_{j=1}^d \mathcal{R}_j \frac{\partial^s u}{\partial x_j^s} \right)\right\|_{L^{p(\cdot)}(\mathbb R^d)}\\
    &\leq c\left\| u + \sum_{j=1}^d \mathcal{R}_j \frac{\partial^s u}{\partial x_j^s}\right\|_{L^{p(\cdot)}(\mathbb R^d)}
    \leq C \left(\|u\|_{L^{p(\cdot)}(\mathbb R^d)}+\sum_{j=1}^d \left\|\frac{\partial^s u}{\partial x_j^s}\right\|_{L^{p(\cdot)}(\mathbb R^d)}\right),
\end{align*}
where we used the definition of $h_s$ in the second step, the identity $\delta_0 \ast f =f$    and \cite[Lem.~12.4.3]{DHHR}
in the fourth step, as well as  the boundedness of the  Riesz transform  in the last step (see, e.g., \cite[Lem.~3.3]{GHN07} or \cite[Lem.~12.4.3]{DHHR}).
The right-hand side is bounded since $u\in X^{s,p(\cdot)}(\rn)$ which shows that \( f \in L^{p(\cdot)}(\mathbb{R}^d) \).
It remains to show that \( u = g_s * f \). Supposing that \( u \in \mathcal{S}(\mathbb{R}^d) \cap X^{s,p(\cdot)}(\mathbb{R}^d) \), we have that
\[
\hat{g}_s \hat{f} = \hat{g}_s \hat{\lambda}_s \left( \hat{u} + \sum_{j=1}^d \frac{-{i}{\xi}_j}{|\xi|} ((2\pi)^s { i {\xi}_j} |\xi|^{-1+s}) \hat{u} \right),
\]
where we used Theorem \ref{thm-prop-frac-grad}(iii) and the fact that $\widehat{\mathcal{R}_j\varphi} (\xi)= -\frac{i \xi_j}{|\xi|}\hat{\varphi}(\xi)$, $\varphi\in \mathcal{S}(\rn)$.
Making simplifications in the summation on the right hand side, we have
\[
\hat{g}_s \hat{f} = \hat{g}_s \hat{\lambda}_s (\hat{u} + (2\pi |\xi|)^s \hat{u}) = \hat{u},
\]
where we  used the definition of \( \lambda_s \)  and  $g_s$ in the second equality.
Therefore \( u = g_s * f \) whenever \( u \in \mathcal{S}(\mathbb{R}^d) \). The general case then follows from the density of \( \mathcal{S}(\mathbb{R}^d) \) in \( X^{s,p(\cdot)}(\mathbb{R}^d) \), since \( C_0^\infty(\mathbb{R}^d) \subset \mathcal{S}(\mathbb{R}^d) \cap X^{s,p(\cdot)}(\mathbb{R}^d) \subset X^{s,p(\cdot)}(\mathbb{R}^d) \) and \( X^{s,p(\cdot)}(\mathbb{R}^d) \) is defined as the completion of \( C_0^\infty(\mathbb{R}^d) \) in the \( X^{s,p(\cdot)} \)-norm.
\end{proof}

\begin{remark} An important consequence of Theorem \ref{thm-char-bessel} is that
we can deduce many properties about $X^{s,p(\cdot)}(\mathbb{R}^d)$ from the existing literature on Bessel potential spaces. Moreover, a
 characterization closely related to Theorem~\ref{thm-char-bessel} was proved in \cite{AS06} for a scale of spaces defined analogously to those in Definition~\ref{def-Xsp}, with the fractional Riesz derivative replacing the fractional gradient.  Specifically, if $0<\alpha<d$ and $p(\cdot)\in\mathcal{P}^{\log}(\rn)$, and if $Y^{\alpha,p(\cdot)}(\rn)$ denotes the space of all functions $f\in L^{p(\cdot)}(\rn)$ such that $D^\alpha f\in L^{p(\cdot)}(\rn)$, where $D^\alpha f$ is the fractional Riesz derivative of order $\alpha$, then \[ L^{\alpha,p(\cdot)}(\rn)=Y^{\alpha,p(\cdot)}(\rn), \] whenever $1<p^- \le p^+ < d/\alpha$.
Theorem~\ref{thm-char-bessel} is also a special case of \cite[Thm.~4.5]{CGS26}.
\end{remark}

\subsection{Properties of variable nonlocal fractional Sobolev spaces}

\begin{theorem}[Sobolev-type inequality]\label{fractionalSI}
Assume that $p(\cdot)\in\mathcal{P}^{\log}(\mathbb{R}^d)$ satisfies
\(
1<p^- \le p^+<\infty
\),
and let $s\in(0,1)$ be such that $s<\frac{d}{p^+}$. Then there exists
$C=C(d,p,s)>0$ such that
\[
\|u\|_{L^{p^{*}(\cdot)}(\mathbb{R}^{d})}
\leq
C\,\|\nabla^{s}u\|_{L^{p(\cdot)}(\mathbb{R}^{d};\rn)}
\]
for all $u\in {X}^{s,p(\cdot)}(\mathbb{R}^{d})$, where for $x\in \rn$,
\begin{equation}\label{def-p-ast}
\frac{1}{p^{*}(x)}:=\frac{1}{p(x)}-\frac{s}{d}.
\end{equation}
\end{theorem}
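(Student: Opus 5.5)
The plan is to combine the fractional fundamental theorem of calculus (Theorem~\ref{thm-frac-FTOC}) with two known mapping properties in variable Lebesgue spaces: the boundedness of the Riesz potential $I_s$ recalled in Remark~\ref{rem:bd-Riesz}, and the boundedness of the Riesz transforms on $L^{p(\cdot)}$. We first prove the inequality for $u\in C_0^\infty(\mathbb{R}^d)$. By Theorem~\ref{thm-frac-FTOC} we can write
\[
u = I_s\Big(\sum_{j=1}^d \mathcal{R}_j \frac{\partial^s u}{\partial x_j^s}\Big).
\]
The hypothesis $s<d/p^+$ means exactly $p^+<d/s$, so \eqref{eq:bd-Riesz} with $\alpha=s$ applies. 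Its target exponent $q$ coincides with $p^*$ from \eqref{def-p-ast}, and it gives
\[
\|u\|_{L^{p^*(\cdot)}(\mathbb{R}^d)}\le c\,\Big\|\sum_{j=1}^d \mathcal{R}_j \frac{\partial^s u}{\partial x_j^s}\Big\|_{L^{p(\cdot)}(\mathbb{R}^d)}
\le c\sum_{j=1}^d \Big\|\mathcal{R}_j \frac{\partial^s u}{\partial x_j^s}\Big\|_{L^{p(\cdot)}(\mathbb{R}^d)}.
\]
The Riesz transforms are bounded on $L^{p(\cdot)}(\mathbb{R}^d)$ when $1<p^-\le p^+<\infty$ and $p\in\mathcal{P}^{\log}$, as already used in the proof of Theorem~\ref{thm-char-bessel} via \cite[Lem.~12.4.3]{DHHR}. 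Finally, pointwise $|\partial^s_{x_j}u|\le|\nabla^s u|$ and the Luxemburg norm is monotone, so the right-hand side is at most $C\|\nabla^s u\|_{L^{p(\cdot)}(\mathbb{R}^d;\mathbb{R}^d)}$.

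A technical point is that the FTOC identity must hold pointwise, or at least a.e., as an honest function identity. It is not enough to have it only in $\mathcal{S}'$, since \eqref{eq:bd-Riesz} is applied to the $L^{p(\cdot)}$ function $g:=\sum_j\mathcal{R}_j\partial^s_{x_j}u$. To justify this, note that for $u\in C_0^\infty$ we have $\nabla^s u=I_{1-s}*\nabla u$ by Theorem~\ref{thm-prop-frac-grad}(i). This function is smooth and decays like $|x|^{-(d+1-s)}$, so $g$ is smooth and lies in every $L^r$ with $r>1$. Hence $I_s g$ is a locally integrable function whose Fourier transform agrees with $\hat u$ as tempered distributions, and therefore $I_s g=u$ a.e.

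Next we pass to general $u\in X^{s,p(\cdot)}(\mathbb{R}^d)$ by density. Choose $u_k\in C_0^\infty$ with $u_k\to u$ in the $X^{s,p(\cdot)}$-norm. Applying the inequality to $u_k-u_l$ shows that $(u_k)$ is Cauchy in $L^{p^*(\cdot)}$, so it converges there to some $v$. Since $u_k\to u$ in $L^{p(\cdot)}$, we may pass to a subsequence converging a.e. in both spaces, which gives $v=u$. Passing to the limit in $\|u_k\|_{L^{p^*(\cdot)}}\le C\|\nabla^s u_k\|_{L^{p(\cdot)}}$, using continuity of the norms, yields the claim. Here $\nabla^s u$ for $u\in X^{s,p(\cdot)}$ is understood as the $L^{p(\cdot)}$-limit of $\nabla^s u_k$. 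This limit agrees with the distributional fractional gradient, because $I_{1-s}u_k\to I_{1-s}u$ in $L^{q(\cdot)}$ by Remark~\ref{rem:bd-Riesz}. That remark needs $p^+<d/(1-s)$; if this fails, we simply take the gradient to be defined by the limit.

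I expect the main obstacle to be the rigorous justification of the FTOC identity as a pointwise identity between functions in the correct spaces. It requires checking decay of $\nabla^s u$ so that $I_s$ of the Riesz transforms is absolutely convergent. The norm estimates themselves are direct consequences of \eqref{eq:bd-Riesz} and the Riesz transform bounds.
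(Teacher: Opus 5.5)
Your proposal is correct and follows essentially the same route as the paper, whose proof simply transfers Shieh--Spector's argument: the fractional FTOC, then the variable-exponent Riesz potential bound \eqref{eq:bd-Riesz} with $\alpha=s$, then boundedness of the Riesz transforms on $L^{p(\cdot)}$, then density. One small slip does not affect the argument: since $\nabla u$ has compact support and zero mean, $\nabla^s u=I_{1-s}*\nabla u$ decays like $|x|^{-(d+s)}$, not $|x|^{-(d+1-s)}$, and this still gives $g\in L^r$ for every $r>1$, as you need.
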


\begin{proof}
The result follows in a parallel way to the proof  of Theorem 1.8 in \cite{SS15}, where in replacement of Lemma 2.4 therein we now use \eqref{eq:bd-Riesz} from Remark \ref{rem:bd-Riesz}.
Alternatively, the result can also be deduced from the
Sobolev inequality in generalized Orlicz spaces \cite[Thm.~1]{Campos24} with
$A(x,t)=t^{p(x)}$ and $B(x,t)=t^{p^*(x)}$.
\end{proof}

\begin{theorem}[Local Sobolev type inequality] \label{Iq}
Let $p(\cdot)\in\mathcal{P}^{\log}(\mathbb{R}^d)$ be such that \(
1<p^- \le p^+<\infty\), $s\in(0,1)$, $s<\frac{d}{p^+}$, define $p^{\ast}$ as in \eqref{def-p-ast} above. {  Let $\Omega \subset \rn$ be a bounded domain}  and
assume  that $q(\cdot)\in  {\mathcal{P}}({\Omega})$  satisfies  $p^*(x)>q(x)$ for every $x\in \Omega$ and $q^-\ge 1$.
Then,
there exists a positive constant $C=C(\Omega,d,s,p,q)$ such that
\[
\|u\|_{L^{q(\cdot)}(\Omega)}
\le
C\,\|\nabla^s u\|_{L^{p(\cdot)}(\mathbb{R}^d;{\rn})}
\]
for all
\(
u\in {X}^{s,p(\cdot)}(\mathbb{R}^d).
\)
\end{theorem}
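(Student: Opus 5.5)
The plan is to combine the global Sobolev inequality of Theorem~\ref{fractionalSI} with the embedding $L^{p^*(\cdot)}(\Omega)\hookrightarrow L^{q(\cdot)}(\Omega)$ on the bounded set $\Omega$. For $u\in X^{s,p(\cdot)}(\mathbb{R}^d)$, Theorem~\ref{fractionalSI} gives
\[
\|u\|_{L^{p^*(\cdot)}(\Omega)}\le \|u\|_{L^{p^*(\cdot)}(\mathbb{R}^d)}\le C\,\|\nabla^s u\|_{L^{p(\cdot)}(\mathbb{R}^d;\rn)}.
\]
It then suffices to show that there is a constant $c=c(\Omega,p^*,q)$ with $\|u\|_{L^{q(\cdot)}(\Omega)}\le c\,\|u\|_{L^{p^*(\cdot)}(\Omega)}$.

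Since $\Omega$ is bounded, I would use the standard embedding for variable Lebesgue spaces on sets of finite measure: if $q(x)\le r(x)$ a.e.\ on $\Omega$ and $|\Omega|<\infty$, then $L^{r(\cdot)}(\Omega)\hookrightarrow L^{q(\cdot)}(\Omega)$ with constant at most $1+|\Omega|$ (cf.\ \cite[Cor.~3.3.4]{DHHR}). Here $r=p^*$. Note that $p^*(x)<\infty$ because $s<d/p^+$, and in fact $(p^*)^+\le \big(1/p^+-s/d\big)^{-1}<\infty$.

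To keep the argument self-contained, I would also sketch the proof of this embedding by normalization. Assume $\|u\|_{L^{p^*(\cdot)}(\Omega)}\le 1$, so that $\int_\Omega |u|^{p^*(x)}\,dx\le 1$. Pointwise, Young's inequality gives
\[
|u|^{q(x)}\le \frac{q(x)}{p^*(x)}|u|^{p^*(x)}+\Big(1-\frac{q(x)}{p^*(x)}\Big)\le |u|^{p^*(x)}+1.
\]
Equivalently, one can split into $|u|\le1$ and $|u|>1$. Integrating over $\Omega$ yields $\varrho_{L^{q(\cdot)}(\Omega)}(u)\le 1+|\Omega|$. Since $q^-\ge1$, the modular is convex, so $\varrho_{L^{q(\cdot)}(\Omega)}\big(u/(1+|\Omega|)\big)\le 1$. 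Hence $\|u\|_{L^{q(\cdot)}(\Omega)}\le 1+|\Omega|$, and the general case follows by homogeneity of the norm. Combining this with the display above gives the claim with $C(\Omega,d,s,p,q)=(1+|\Omega|)\,C(d,p,s)$.

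I expect the main point needing care to be that the strict inequality $p^*>q$ is never actually used; only $q\le p^*$ matters. One should also check that $u$ restricted to $\Omega$ is well defined as an element of $L^{p^*(\cdot)}(\Omega)$, but this is immediate because Theorem~\ref{fractionalSI} places $u$ in $L^{p^*(\cdot)}(\mathbb{R}^d)$. No log-H\"older regularity of $q$ is required, since the pointwise Young argument works for an arbitrary measurable $q$ with $1\le q\le p^*$.
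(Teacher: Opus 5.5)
Your proof is correct and has the same outer structure as the paper's: Theorem~\ref{fractionalSI} on $\mathbb{R}^d$, followed by the embedding $L^{p^*(\cdot)}(\Omega)\hookrightarrow L^{q(\cdot)}(\Omega)$ on the bounded set $\Omega$. The difference is in how that embedding is proved.

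The paper introduces the auxiliary exponent $1/r(x):=1/q(x)-1/p^*(x)$. It then applies the generalized H\"older inequality to $u\chi_\Omega$ and bounds $\|\chi_\Omega\|_{L^{r(\cdot)}}\le\max\{|\Omega|^{1/r^-},|\Omega|^{1/r^+}\}$. You instead use the pointwise Young bound $|u|^{q(x)}\le|u|^{p^*(x)}+1$, the unit-ball property of the Luxemburg norm, and convexity of the modular (from $q^-\ge1$), which gives the explicit constant $1+|\Omega|$.

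Your route is more elementary and needs no auxiliary exponent. It also works verbatim under the weaker hypothesis $q\le p^*$. In the paper's route, the strict inequality $q<p^*$ is exactly what keeps $r$ finite-valued, so that $r$ lies in the class $\mathcal{P}(\Omega)$ as the paper defines it; equality on a set of positive measure would force $r=\infty$ there.

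In exchange, the paper's constant $c\,|\Omega|^{1/r^+}$ (when $|\Omega|<1$) reflects the smallness of $\Omega$ whenever $r^+<\infty$, whereas yours never drops below $1$. Both constants are bounded by a fixed multiple of $\max\{1,|\Omega|\}$, independently of $q$.
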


\begin{proof}
Follows from H\"older's inequality, the boundedness of $\Omega$ and Theorem \ref{fractionalSI}. Indeed, with
\[
\frac{1}{r(x)}
:=
\frac{1}{q(x)}-\frac{1}{p^*(x)},
\]
it holds
\[
\|u\|_{L^{q(\cdot)}(\Omega)}
\le c \|u\|_{L^{p^{*}(\cdot)}(\rn)} \|\chi_{\Omega}\|_{L^{r(\cdot)}(\rn)} \le
c\, \max\{|\Omega|^{1/r^-},|\Omega|^{1/r^+}\}\,\|\nabla^s u\|_{L^{p(\cdot)}(\mathbb{R}^d;{\rn})}
\le C \|\nabla^s u\|_{L^{p(\cdot)}(\mathbb{R}^d;{\rn})}.
\]
Alternatively, Theorem \ref{Iq} can be deduced from the
Sobolev inequality in generalized Orlicz spaces \cite[Thm.~1]{Campos24}.
\end{proof}

Moreover, we also have a fractional Hardy inequality in the setting of variable Lebesgue spaces.

\begin{theorem}[Fractional Hardy inequality]\label{thm:SteinWeissVariable}
Fix $0<s<1$. Let $p(\cdot),q(\cdot)\in\mathcal{P}^{\log}(\mathbb{R}^d)$ with
$1<p^-\le p^+<\infty$ and $1<q^-\le q^+<\infty$. Assume
$p(x)\le q(x)$ for every  $x\in\mathbb{R}^d$.
Let  $a\in \mathbb{R}$ satisfy
\begin{equation}\label{eq:SW1}
-\frac{d}{q^+}<a\le 0,
\end{equation}
and suppose  that
\begin{equation}\label{eq:SW2}
\frac{1}{p(x)}-\frac{1}{q(x)}
=
\frac{s}{d}
+
\frac{a}{d}, \qquad x\in \rn.
\end{equation}
Then, for all
$u\in  X^{s,p(\cdot)}(\mathbb{R}^d)$, there holds
\[
\bigl\||\cdot|^{\,a} u\bigr\|_{L^{q(\cdot)}(\rn)}
\le
C
\bigl\|( \nabla^su)\bigr\|_{L^{p(\cdot)}(\rn;\rn)}.
\]
\end{theorem}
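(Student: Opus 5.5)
We plan to reduce the inequality to a weighted (Stein--Weiss type) boundedness of the Riesz potential $I_s$ on variable Lebesgue spaces, via the fractional fundamental theorem of calculus (Theorem~\ref{thm-frac-FTOC}). By density of $C_0^\infty(\mathbb{R}^d)$ in $X^{s,p(\cdot)}(\mathbb{R}^d)$, it suffices to prove the estimate for $u\in C_0^\infty(\mathbb{R}^d)$. The passage to general $u$ is then a Fatou-type argument: take $u_k\to u$ in $X^{s,p(\cdot)}$, extract an a.e.\ convergent subsequence, and use the Fatou property of the Luxemburg norm on the left-hand side.

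\textbf{Step 1 (representation).} For $u\in C_0^\infty$, Theorem~\ref{thm-frac-FTOC} gives
\[
u=I_s g,\qquad g:=\sum_{j=1}^d \mathcal{R}_j\frac{\partial^s u}{\partial x_j^s}.
\]
Since the Riesz transforms are bounded on $L^{p(\cdot)}(\mathbb{R}^d)$ for $1<p^-\le p^+<\infty$, $p\in\mathcal P^{\log}$ (as used in the proof of Theorem~\ref{thm-char-bessel}), we obtain
\[
\|g\|_{L^{p(\cdot)}}\le c\,\|\nabla^s u\|_{L^{p(\cdot)}(\mathbb{R}^d;\mathbb{R}^d)}.
\]
Moreover, $|u|\le I_s|g|$ pointwise.

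\textbf{Step 2 (weighted Riesz potential bound).} It remains to show
\[
\||\cdot|^{a} I_s|g|\|_{L^{q(\cdot)}}\le C\|g\|_{L^{p(\cdot)}}
\]
under \eqref{eq:SW1} and \eqref{eq:SW2}. This is a Stein--Weiss inequality with weight $|x|^{a}$ on the target side only; since $a\le0$, it is a power-weight singularity at the origin. The key observation is that $\frac1p-\frac1q=\frac{s+a}{d}$ with $0\le s+a\le s$, because $p\le q$ forces $s+a\ge 0$. We then split the kernel. Write $s=(s+a)+(-a)$ and use the pointwise bound
\[
|x|^{a}I_s|g|(x)\lesssim I_{s+a}|g|(x)+\text{(error terms)}.
\]
Concretely, we split $I_s|g|(x)$ into the regions $|y|\ge |x|/2$ and $|y|<|x|/2$. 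On the region $|y|<|x|/2$ we have $|x-y|\sim|x|$, and that piece is bounded by $|x|^{s-d}\int_{|y|<|x|/2}|g|$, a Hardy-type averaging operator. On the complementary region we plan to use $|x|^{a}\le$ a multiple of $|x-y|^{a}$ or of $|y|^{a}$, which reduces it to $I_{s+a}$ applied to $|g|$ or to $|\cdot|^{a}|g|$.

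For the $I_{s+a}$ term with $s+a>0$, we use the variable Hardy--Littlewood--Sobolev bound \eqref{eq:bd-Riesz}. We need $p^+<d/(s+a)$, which holds since $\frac1{p}\ge \frac{s+a}{d}+\frac1{q^+}>\frac{s+a}{d}$; the output exponent is exactly $q$. In the degenerate case $s+a=0$, i.e.\ $p=q$, the operator is of Hardy type and we use boundedness of the maximal operator instead. For the Hardy averaging piece, we bound it by $|x|^{a+s-d}\|g\|_{L^{p(\cdot)}(B_{|x|/2})}\|\chi_{B_{|x|/2}}\|_{L^{p'(\cdot)}}$, or better, dominate it by a maximal function.

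\textbf{Main obstacle.} The hardest step is the weighted estimate with variable exponents. Two difficulties arise: power weights interact with variable exponents (log-H\"older decay at $0$ and $\infty$ is what makes $\|\chi_{B_r}\|_{L^{p(\cdot)}}\sim r^{d/p(0)}$ for small $r$ and $\sim r^{d/p_\infty}$ for large $r$), and condition \eqref{eq:SW1}, $a>-d/q^+$, must be used to ensure that $|x|^{a}$ is locally in $L^{q(\cdot)}$ and that the Hardy operator is bounded. If the direct splitting becomes messy, we would instead invoke a known variable-exponent Stein--Weiss theorem for $I_s$ with power weights (of Samko type, for $\mathcal P^{\log}$ exponents with $p\le q$), verify its hypotheses from \eqref{eq:SW1} and \eqref{eq:SW2}, and combine it with Step~1.
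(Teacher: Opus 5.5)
Your reduction is the paper's own. You use density, Theorem~\ref{thm-frac-FTOC} to write $u=I_s g$ with $g=\sum_j\mathcal R_j\partial^s_{x_j}u$, and boundedness of the Riesz transforms. That leaves a weighted $L^{p(\cdot)}\to L^{q(\cdot)}$ bound for $I_s$ with the weight $|x|^{a}$ on the target side only. The paper closes this last step exactly by your fallback: it cites the variable-exponent Stein--Weiss inequality \cite[Thm.~3.1]{CUS24} with $b=0$. Your Fatou argument for passing from $C_0^\infty$ to $X^{s,p(\cdot)}$ is correct, and more explicit than the paper's ``by density''.

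The direct splitting you propose as your primary route has a hole, however.

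\textbf{What works.} On $\{|x-y|\le 2|x|\}$, which contains your Hardy piece $|y|<|x|/2$, everything is fine. There $a\le0$ gives $|x|^{a}\le 2^{-a}|x-y|^{a}$. So this part is dominated by $I_{s+a}|g|$ if $s+a>0$, and by $Mg$ if $s+a=0$. Your alternative of dominating the Hardy piece by $Mg$ alone works only in the second case; otherwise you are left with $|x|^{s+a}Mg(x)$.

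\textbf{What fails.} On $\{|x-y|>2|x|\}$ one has $|y|\sim|x-y|>|x|$. For $a<0$, both $|x|^{a}\lesssim|x-y|^{a}$ and $|x|^{a}\lesssim|y|^{a}$ fail there. This region produces the adjoint Hardy operator
\[
Tg(x):=|x|^{a}\int_{|y|>|x|}|y|^{s-d}|g(y)|\,dy ,
\]
which is not pointwise controlled by $I_{s+a}|g|$ or by $Mg$. For example, take $g=\chi_{\{1<|y|<2\}}$. Then $Tg(x)\sim|x|^{a}\to\infty$ as $x\to0$, while $I_{s+a}g$ and $Mg$ stay bounded near the origin. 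Passing to $I_{s+a}(|\cdot|^{a}|g|)$ does not help either, since $\||\cdot|^{a}g\|_{L^{p(\cdot)}}$ is not controlled by $\|g\|_{L^{p(\cdot)}}$ when $a<0$.

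This is precisely the term where \eqref{eq:SW1} is used: already for constant exponents, $Tg\in L^{q}$ near the origin forces $aq>-d$. To complete the direct route you need a separate $L^{p(\cdot)}\to L^{q(\cdot)}$ estimate for $T$. That is a variable-exponent Hardy inequality with power weights, relying on the log-H\"older behaviour of $p$ and $q$ at $0$ and at infinity. Without it, your argument stands only through the cited Stein--Weiss theorem, as in the paper.
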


\begin{proof}
The inequality for $u\in C_0^\infty(\mathbb{R}^d)$ is a consequence of the
representation given in Theorem~\ref{thm-frac-FTOC}, which allows us to write
\(
u = I_s \sum_{j=1}^d \mathcal{R}_j \frac{\partial^s u}{\partial x_j^s},
\)  aside  from
\cite[Thm.~3.1]{CUS24} with $b=0$, and the boundedness of the Riesz-transform on $L_{p(\cdot)}(\rn)$ from  \cite[Lem.~3.3]{GHN07} (see also \cite[Lem.~12.4.3]{DHHR}). The general case follows from density.
\end{proof}

\begin{remark}
In particular, choosing $p(\cdot)=q(\cdot)$, $a=-s$
with \( s p^+ < d \) we obtain
\[
\bigl\||\,\cdot\,|^{-s}u\bigr\|_{L^{p(\cdot)}(\mathbb R^d)}
\leq
C\|\nabla^s u\|_{L^{p(\cdot)}
(\mathbb R^d;\mathbb R^d)}.
\]
\end{remark}

Our results obtained so far demonstrate a close analogy between the fractional
gradient and the classical gradient. So far, we have studied the
'subcritical' regime
\(
sp^+<d.
\)
We now turn to the \emph{limiting case}
\(
sp^-=d.
\)
We emphasize, however, that in the
variable exponent setting the condition $sp^-=d$ is only a sufficient global
condition and does not represent a sharp critical threshold.
In
particular, the intermediate regime
\(
sp^-<d\leq sp^+
\)
is not covered by these two cases. We  establish a Trudinger-type inequality in this limiting case which can be interpreted
as the critical Sobolev embedding
\(
X^{s,p(\cdot)}(\mathbb R^d)
\hookrightarrow
\exp L^{(p^-)'}(\Omega),
\)
where $\exp L^{p'_-}(\Omega)$ denotes the exponential Orlicz (or exponential
Zygmund) space. Consequently, the Trudinger inequality provides the endpoint
analogue of the Sobolev embedding theorem in the critical case.

\begin{theorem}[Variable exponent fractional Trudinger inequality]
Let $p(\cdot)\in\mathcal P^{\log}(\mathbb R^d)$ satisfy
\(
1<p^-\le p^+<\infty,
\) and
\(sp^{-}=d.
\)
Then there exist constants $A_1,A_2,C>0$ such that for every bounded open set
$\Omega\subset\mathbb R^d$  and every
$u\in X^{s,p(\cdot)}(\mathbb R^d)$ for which $g:=
\sum_{j=1}^d
\mathcal{R}_j\frac{\partial^su}{\partial x_j^s}$ is nonnegative, it holds
\[
\int_\Omega
\exp\left[
\left(
\frac{|u(x)|}
{A_1C\|u\|_{X^{s,p(\cdot)}(\mathbb R^d)}}
\right)^{(p^-)'}
\right]
\,dx
\le
A_2.
\]
\end{theorem}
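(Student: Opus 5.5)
The plan is to reduce the statement to a Trudinger-type inequality for Riesz potentials of nonnegative functions in variable Lebesgue spaces. By density it suffices to work with $u\in C_0^\infty(\mathbb R^d)$ and then pass to the limit. For such $u$, Theorem~\ref{thm-frac-FTOC} gives
\[
u=I_s g,\qquad g=\sum_{j=1}^d \mathcal R_j\frac{\partial^s u}{\partial x_j^s}.
\]
The boundedness of the Riesz transforms on $L^{p(\cdot)}(\mathbb R^d)$ (\cite[Lem.~12.4.3]{DHHR}) then yields
\[
\|g\|_{L^{p(\cdot)}(\mathbb R^d)}\le C\|\nabla^s u\|_{L^{p(\cdot)}(\mathbb R^d;\mathbb R^d)}\le C\|u\|_{X^{s,p(\cdot)}(\mathbb R^d)}.
\]
This is where the constant $C$ in the statement comes from. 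The nonnegativity hypothesis on $g$ is exactly what the potential-theoretic Trudinger estimates require, since they are formulated for $I_s g$ with $g\ge 0$. It also gives $|u|=I_s g$ pointwise.

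The core step is an exponential integrability estimate for $I_s g$ on $\Omega$ with $g\ge0$ and $\|g\|_{L^{p(\cdot)}}\le 1$. I would prove it by Hedberg's splitting at a radius $\delta$ to be optimized:
\[
I_sg(x)\lesssim \int_{|x-y|<\delta}\frac{g(y)}{|x-y|^{d-s}}\,dy+\int_{\delta\le|x-y|}\frac{g(y)}{|x-y|^{d-s}}\,dy .
\]

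The near part is bounded by $c\,\delta^{s}Mg(x)$, using the boundedness of the maximal operator on $L^{p(\cdot)}$ for $p^->1$ (\cite{DHHMS09}).

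For the far part I would not use $p(\cdot)$ directly. Instead I use $g\in L^{p^-}(B)+L^{p^+}$-type control. More simply, since $|\Omega|<\infty$ and only $x\in\Omega$ matters, split $g=g\chi_{\{g\le1\}}+g\chi_{\{g>1\}}$.
\begin{itemize}
\item The piece $g\chi_{\{g>1\}}$ lies in $L^{p^-}$ with norm $\le1$, because its modular is bounded by $\varrho_{L^{p(\cdot)}}(g)\le1$.
\item The piece $g\chi_{\{g\le1\}}$ lies in $L^{p^+}$, and a fortiori in $L^{r}$ for every $r\ge p^+$.
\end{itemize}
For the $L^{p^-}$ part with $sp^-=d$, Hölder gives a far-field bound of order $\left(\log(R/\delta)\right)^{1/(p^-)'}$ on the annulus $\delta\le|x-y|\le R$, where $R=\operatorname{diam}$ of a neighbourhood. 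The region $|x-y|>R$ is handled by $R^{s-d/p^-}=1$-type bounds. For the $L^{p^+}$ part, $sp^+\ge d$. If $sp^+>d$, Hölder gives a uniformly bounded contribution, since $(d-s)(p^+)'<d$ near the origin and the tail integrability is inherited from the $L^{p^-}$-type bound. If $p^+=p^-$, this part is treated as the first.

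Choosing $\delta=\min\{1,(Mg(x))^{-p^-/d}\}$ gives the pointwise bound
\[
I_sg(x)\le c\bigl(1+\log^+ Mg(x)\bigr)^{1/(p^-)'} .
\]
Raising it to the power $(p^-)'$, dividing by a suitable $A_1$, and exponentiating gives
\[
\exp\left[\left(I_sg/A_1\right)^{(p^-)'}\right]\lesssim 1+(Mg)^{\varepsilon}
\]
with some $\varepsilon\le p^-$. Integrating over $\Omega$ then uses $\int (Mg)^{p(\cdot)}\lesssim1$ together with $|\Omega|$. Alternatively, one can invoke the known variable-exponent Trudinger result for Riesz potentials of nonnegative functions (Futamura–Mizuta–Shimomura type, cf.\ \cite{DHHR}) directly with $\alpha=s$.

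The main obstacle is the uniformity claimed in the statement: $A_2$ should not depend on $\Omega$. The naive argument above produces a factor $|\Omega|$. To get around this, I would first try to obtain the exponential bound with the "$1$" absorbed, namely
\[
\exp\left[\left(I_sg/A_1\right)^{(p^-)'}\right]-1\lesssim (Mg)^{p^-}\chi_{\{Mg\ge1\}}+\dots,
\]
so that the integral is controlled by modulars alone. If this fails, I would accept the dependence of $A_2$ on $|\Omega|$, or on a bound on it, as the literature versions do. Finally, I normalize by replacing $g$ with $g/(C\|u\|_{X^{s,p(\cdot)}})$, which has $L^{p(\cdot)}$-norm at most $1$. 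Combining with $|u|=I_sg$ and using Fatou's lemma for the density passage yields the stated inequality.
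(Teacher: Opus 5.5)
Your outline (density, $u=I_sg$ from Theorem~\ref{thm-frac-FTOC}, $\|g\|_{L^{p(\cdot)}}\le C\|\nabla^su\|_{L^{p(\cdot)}}$ via the Riesz transforms, then a Trudinger inequality for Riesz potentials) is the paper's. The paper simply takes your ``alternative'' and quotes \cite[Thm.~2]{FM08} for $I_sg$ with the global norm $\|g\|_{L^{p(\cdot)}(\mathbb R^d)}$. The problem is that this core step is false as you formulate it, and your Hedberg argument shows where it breaks: the region $|x-y|>R$.

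\emph{Why the tail fails.} The condition $sp^-=d$ means $(d-s)(p^-)'=d$, so $|\cdot|^{s-d}\notin L^{(p^-)'}(\{|z|>R\})$. H\"older against $\|g\chi_{\{g>1\}}\|_{L^{p^-}}$ therefore gives no bound: the annulus $R<|x-y|<R'$ contributes $(\log(R'/R))^{1/(p^-)'}$, which is unbounded in $R'$. There is no ``$R^{s-d/p^-}=1$'' estimate here. The piece $g\chi_{\{g\le1\}}$ is no better, since $(d-s)r'\le d$ for every $r\ge p^+\ge d/s$, so no tail integrability is ``inherited''.

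\emph{This is not a technicality.} Take $p\equiv d/s$ and $g_n(y)=|y|^{-s}(\log|y|)^{-1}\chi_{\{e<|y|<n\}}$. Then $\sup_n\|g_n\|_{L^{d/s}}<\infty$, but $I_sg_n\ge c\log\log n$ on $B(0,1)$. For $d\ge2$ the functions $u_n=I_sg_n$ (after smoothing $g_n$) lie in $X^{s,d/s}(\mathbb R^d)$, satisfy $\sum_j\mathcal R_j\frac{\partial^s u_n}{\partial x_j^s}=g_n\ge0$, and have $\|\nabla^su_n\|_{L^{d/s}}$ bounded. Hence no bound on $u$ over $\Omega$ in terms of $\|g\|_{L^{p(\cdot)}}$ or $\|\nabla^su\|_{L^{p(\cdot)}}$ alone can hold. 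Quoting the literature does not close this gap: such inequalities can only hold for densities supported in a fixed bounded set, whereas $g=(-\Delta)^{s/2}u$ merely decays like $|z|^{-d-s}$. The paper's proof, which applies the citation to $g$ on all of $\mathbb R^d$, has the same hole.

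\emph{The missing idea} is to use the $L^{p(\cdot)}$-part of $\|u\|_{X^{s,p(\cdot)}}$. In the example, $\|u_n\|_{L^{d/s}}\gtrsim\log\log n$, which is why the stated inequality itself survives. The cleanest route is Theorem~\ref{thm-char-bessel}: write $u=g_s*f$ with $\|f\|_{L^{p(\cdot)}}\le C\|u\|_{X^{s,p(\cdot)}}$. Since $0\le g_s(z)\lesssim|z|^{s-d}\chi_{\{|z|<1\}}+e^{-c|z|}$, the generalized H\"older inequality gives
\[
|u(x)|\le C\int_{|x-y|<1}\frac{|f(y)|}{|x-y|^{d-s}}\,dy+C\|f\|_{L^{p(\cdot)}(\mathbb R^d)} .
\]
Your Hedberg argument with $R=1$ and no tail then goes through, giving the claim with $A_1$ universal and $A_2\lesssim1+|\Omega|$. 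Alternatively, with $\Omega\subset B(x_0,R)$, split $g$ at $B(x_0,2R)$. The far potential then has oscillation $\le C_R\|g\|_{L^{p(\cdot)}}$ on $B(x_0,R)$, and its mean there is controlled by $\|u\|_{L^{p(\cdot)}}+\|g\|_{L^{p(\cdot)}}$.

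\emph{Two further corrections.} First, for $u\in C_0^\infty(\mathbb R^d)$ the condition $g\ge0$ forces $u\equiv0$, since otherwise $u=I_sg>0$ everywhere. So your density reduction is vacuous under the positivity hypothesis. Drop it and use $|u|\le I_s|g|$ (or $|u|\le g_s*|f|$); positivity is never needed. Second, $A_2$ cannot be independent of $\Omega$ at all: the integrand is $\ge1$, so the integral is $\ge|\Omega|$. Absorbing the ``$1$'' cannot rescue the inequality as stated, so your fallback (dependence on $|\Omega|$) is forced.
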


\begin{proof}
By density it is enough to consider
$u\in C_0^\infty(\mathbb R^d)$.
{Theorem \ref{thm-frac-FTOC}}
yields
\[
u= I_s g, \qquad \text{where}\quad
g=
\sum_{j=1}^d
\mathcal{R}_j\frac{\partial^su}{\partial x_j^s}.
\]
Since $g$ is nonnegative, we can apply  the variable exponent Trudinger inequality for the Riesz
potential from \cite[Thm. 2, p. {627}]{FM08}, which gives
\[
\int_\Omega
\exp\left[
\left(
\frac{|u(x)|}
{A_1\|g\|_{L^{p(\cdot)}(\mathbb R^d)}}
\right)^{(p^-)'}
\right]
dx
\le
A_2.
\]

Since the Riesz transforms are bounded on
$L^{p(\cdot)}(\mathbb R^d)$, see \cite[Lem.~12.4.3]{DHHR},
we obtain
\begin{equation}\label{eq:est-g-Riesz}
\begin{aligned}
\|g\|_{L^{p(\cdot)}(\rn)}
=
\left\|
\sum_{j=1}^d
\mathcal{R}_j\frac{\partial^su}{\partial x_j^s}
\right\|_{L^{p(\cdot)}(\rn)}
\le
c
\sum_{j=1}^d
\left\|
\frac{\partial^su}{\partial x_j^s}
\right\|_{L^{p(\cdot)}(\rn)}
\le
C
\|\nabla^su\|_{L^{p(\cdot)}(\mathbb R^d;\rn)}\leq C \|u\|_{X^{s,p(\cdot)}(\rn)}.
\end{aligned}
\end{equation}

Consequently,
\[
\exp\left[
\left(
\frac{|u(x)|}
{A_1C\|u\|_{X^{s,p(\cdot)}(\rn)}}
\right)^{(p^-)'}
\right]
\le
\exp\left[
\left(
\frac{|u(x)|}
{A_1\|g\|_{L^{p(\cdot)}(\rn)}}
\right)^{(p^-)'}
\right].
\]

Integrating and using the previous estimate yields
\[
\int_\Omega
\exp\left[
\left(
\frac{|u(x)|}
{A_1C\|u\|_{X^{s,p(\cdot)}(\rn)}}
\right)^{(p^-)'}
\right]
dx
\le
A_2.
\]
\end{proof}

Finally, when \( s p^- > d \), we have the following analogy to the classical embedding theorem of Morrey.

\begin{theorem}[Variable exponent fractional Morrey type inequality]
Let $0<s<1$ and let
$p(\cdot)\in\mathcal P^{\log}(\mathbb R^d)$ satisfy
$
sp^->d
$ (implying $p^->1$) and $p^+<\infty$.
Then there exists a constant $C>0$ such that
\[
\sup_{y\in B(x,1)}
\frac{|u(x)-u(y)|}
{|x-y|^{\,s-\frac{d}{p(x)}}}
\le
C
\|\nabla^su\|_{L^{p(\cdot)}(\mathbb R^d;\rn)}
\]
for every
$u\in X^{s,p(\cdot)}(\mathbb R^d)$ and every
$x\in\mathbb R^d$.
\end{theorem}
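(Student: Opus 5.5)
The plan is to follow the pattern of the Hardy and Trudinger results. We use the fractional FTOC (Theorem~\ref{thm-frac-FTOC}) to write $u=I_s g$ with $g=\sum_{j=1}^d\mathcal R_j\frac{\partial^s u}{\partial x_j^s}$. We then reduce the Morrey estimate to a pointwise H\"older-type estimate for the Riesz potential $I_s$ acting on $L^{p(\cdot)}$, and finish with the bound $\|g\|_{L^{p(\cdot)}}\le C\|\nabla^s u\|_{L^{p(\cdot)}}$ from \eqref{eq:est-g-Riesz}, which rests on the boundedness of the Riesz transforms. By density it suffices to take $u\in C_0^\infty(\mathbb R^d)$. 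For the limiting passage, convergence in $X^{s,p(\cdot)}$ gives a.e.\ convergence along a subsequence, and the uniform bound then passes to the limit pointwise, at least for the continuous representative.

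For the potential estimate, fix $x$ and $y\in B(x,1)$ with $r:=|x-y|$, and write
\[
u(x)-u(y)=\gamma(d,s)\int_{\mathbb R^d}\Bigl(|x-z|^{s-d}-|y-z|^{s-d}\Bigr)g(z)\,dz .
\]
Split the integral into the near region $B(x,2r)$ and the far region $\mathbb R^d\setminus B(x,2r)$. In the near region each kernel is bounded separately. We use H\"older's inequality in variable exponent spaces,
\[
\int_{B(x,2r)}|x-z|^{s-d}|g(z)|\,dz\le 2\|g\|_{L^{p(\cdot)}}\bigl\||x-\cdot|^{s-d}\chi_{B(x,2r)}\bigr\|_{L^{p'(\cdot)}},
\]
and similarly for the term centred at $y$, using $B(x,2r)\subset B(y,3r)$. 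In the far region we have $|x-z|\sim|y-z|$, so the mean value theorem bounds the kernel difference by $c\,r|x-z|^{s-d-1}$. This gives a second H\"older estimate, now with the function $r|x-\cdot|^{s-d-1}\chi_{\{|x-\cdot|\ge 2r\}}$ in $L^{p'(\cdot)}$.

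The main obstacle is computing these $L^{p'(\cdot)}$ norms with the exponent frozen at $x$, namely showing
\[
\bigl\||x-\cdot|^{s-d}\chi_{B(x,2r)}\bigr\|_{L^{p'(\cdot)}}+r\bigl\||x-\cdot|^{s-d-1}\chi_{\{|x-\cdot|\ge 2r\}}\bigr\|_{L^{p'(\cdot)}}\le C\,r^{s-d/p(x)},\qquad r\le 1 .
\]
First decompose each region into dyadic annuli $A_k=\{2^{k}r\le|x-z|<2^{k+1}r\}$, on which the kernel is essentially constant. Next use the standard log-H\"older estimate for characteristic functions of balls, $\|\chi_{B(x,\rho)}\|_{L^{p'(\cdot)}}\sim\rho^{d/p'(x)}$ for $\rho\le1$ and $\sim\rho^{d/p'_\infty}$ for $\rho\ge1$ (DHHR, Lemma 4.5.3). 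Then sum the resulting geometric series. For the inner series we need $s-d+d/p'(x)=s-d/p(x)>0$, which holds because $sp^->d$. For the outer series with $\rho\le1$ we need $s-d-1+d/p'(x)<0$, which holds since $s<1$. The annuli with $\rho\ge1$ contribute $r\sum_k \rho_k^{s-d-1+d/p'_\infty}\lesssim r\le r^{s-d/p(x)}$, because the exponent $s-d/p(x)$ is less than $1$ and $r\le1$. The log-H\"older continuity of $1/p$ at $0$ and at infinity is what permits freezing the exponent at $x$, with constants independent of $x$.

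Collecting the two regions gives $|u(x)-u(y)|\le C r^{s-d/p(x)}\|g\|_{L^{p(\cdot)}}$. Combining this with \eqref{eq:est-g-Riesz} yields the claimed inequality, uniformly in $x$.
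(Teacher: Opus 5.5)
Your proposal is correct and follows essentially the paper's proof. The common steps are: the FTOC representation $u=I_sg$ for $u\in C_0^\infty(\mathbb R^d)$, the near/far splitting at $B(x,2r)$ with $B(x,2r)\subset B(y,3r)$, the mean value bound $c\,r|x-z|^{s-d-1}$ in the far region, dyadic annuli combined with the characteristic-function estimates, geometric series controlled by $\alpha^->0$ and $\alpha(x):=s-d/p(x)<s<1$, and finally \eqref{eq:est-g-Riesz} plus density.

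The deviations are only technical. You treat all $r<1$ at once rather than $r\le 1/4$ followed by the four-step chaining. You use $\rho^{d/p'_\infty}$ instead of $|E|^{1/(p^+)'}$ on annuli of radius at least $1$. You pass to the limit through a.e.\ convergence, whereas the paper proves local uniform convergence; in your version, it is the bound on a set of full measure, together with $\alpha^->0$ and continuity of $p$, that yields the continuous representative. The one step you should write out is that the $y$-centred near term produces $r^{\alpha(y)}$, which is converted to $r^{\alpha(x)}$ by $r^{d(1/p(x)-1/p(y))}\le e^{d c_{\log}}$ for $|x-y|=r$.
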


\begin{proof}
Set
\(
\alpha(x):=s-\frac{d}{p(x)}\). Then
\(\alpha^{-} {=}s-\frac{d}{p^-}>0
\)
implying
\(
0<\alpha^{\color{cyan}-}\leq \alpha(x)<s<1.
\)
We first prove the assumption for
\(u\in C_0^\infty(\mathbb R^d)\). Using the fractional fundamental theorem of calculus (Theorem \ref{thm-frac-FTOC}) we write
\[
u= I_s g, \qquad \text{where}\quad
g=
\sum_{j=1}^d
\mathcal{R}_j\frac{\partial^su}{\partial x_j^s}.
\]
Since $g$ decays like $|z|^{-d-s}$ at infinity for $u\in C_0^{\infty}(\rn)$  which makes $I_sg$ a continuous function,   this is a pointwise identity, and
 for \(x\neq y\) we have
\[
u(x)-u(y)
=
\gamma(d,s)\int_{\mathbb R^d}
\left(
|x-z|^{s-d}-|y-z|^{s-d}
\right)g(z)\,dz.
\]

We use the generalized H\"older inequality from \cite[Lem.~3.2.20]{DHHR},
\[
\int_E|g(z)|\,dz
\leq
2\|g\|_{L^{p(\cdot)}(\mathbb R^d)}
 \|\chi_E\|_{L^{p'(\cdot)}(\mathbb R^d)},
\]
{where $E$ is any measurable subset of $\mathbb R^d$}, together with  the standard
characteristic-function estimate
\begin{equation}\label{eq:characteristic-small-ball}
\|\chi_{B(a,r)}\|_{L^{p'(\cdot)}(\mathbb R^d)}
\leq
C_0r^{d/p'(a)},
\qquad 0<r\leq1,
\end{equation}
which holds uniformly with respect to \(a\in\mathbb R^d\), see \cite[Cor.~4.5.9 and above]{DHHR}. Let
\(
h:=|x-y|\leq\frac14.
\)
We split the integral into a near part over \(B(x,2h)\) and a
far part over its complement.
For \(a\in\{x,y\}\), consider
\[
J(a,h)
:=
\int_{B(a,3h)}
|a-z|^{s-d}|g(z)|\,dz.
\]
Decompose \(B(a,3h)\) into the dyadic annuli
\[
A_k(a)
:=
\left\{
z:
2^{-k-1}3h\leq |a-z|<2^{-k}3h
\right\},
\qquad k\geq0.
\]
Using the generalized H\"older inequality and
\eqref{eq:characteristic-small-ball}, we obtain
\begin{align*}
J(a,h)
&\leq
C_1\sum_{k=0}^\infty
(2^{-k}h)^{s-d}
\int_{A_k(a)}|g(z)|\,dz
\\
&\leq
C_2\|g\|_{L^{p(\cdot)}{(\mathbb R^d)}}
\sum_{k=0}^\infty
(2^{-k}h)^{s-d}
(2^{-k}h)^{d/p'(a)}
=
C_2\|g\|_{L^{p(\cdot)}{(\mathbb R^d)}}
\sum_{k=0}^\infty
(2^{-k}h)^{s-d/p(a)}
\leq
C_3 h^{\alpha(a)}
\|g\|_{L^{p(\cdot)}{(\mathbb R^d)}}.
\end{align*}
The last series converges uniformly because
\(\alpha(a)\geq\alpha^{-}>0\). Since by definition of $h$ it holds
\[
B(x,2h)\subset B(y,3h),
\]
the near part therefore satisfies
\begin{align*}
\int_{B(x,2h)}
\left(
|x-z|^{s-d}+|y-z|^{s-d}
\right)|g(z)|\,dz
\leq
C_4\left(h^{\alpha(x)}+h^{\alpha(y)}\right)
\|g\|_{L^{p(\cdot)}{(\mathbb R^d)}}.
\end{align*}
Moreover, local logarithmic H\"older continuity gives
\[
\left|\frac1{p(x)}-\frac1{p(y)}\right|
\leq
\frac{c_{\log}}{\log(e+1/h)}.
\]
and consequently,
\begin{align*}
\frac{h^{\alpha(y)}}{h^{\alpha(x)}}
=
h^{d(1/p(x)-1/p(y))}
\leq
\exp\left(
d\left|\frac1{p(x)}-\frac1{p(y)}\right|
\log\frac1h
\right)
\leq e^{dc_{\log}}.
\end{align*}
Thus, the near part is bounded by
\begin{equation}\label{eq:near-part}
\int_{B(x,2h)}
\left(
|x-z|^{s-d}+|y-z|^{s-d}
\right)|g(z)|\,dz\leq C_5 h^{\alpha(x)}
\|g\|_{L^{p(\cdot)}(\mathbb R^d)}.
\end{equation}

For \(z\notin B(x,2h)\), the mean value theorem yields
\begin{equation}\label{MVT}
\left|
|x-z|^{s-d}-|y-z|^{s-d}
\right|
\leq
C_6 h|x-z|^{s-d-1}.
\end{equation}
Here we used that  every point $\xi$ on the line segment joining \(x\) and \(y\)
has distance
\[
|\xi-z|=|x+t(y-x)-z|
\geq |x-z|-t|x-y|
\geq \frac12|x-z|
\]
for every
\(t\in[0,1]\) and the fact that $s-d-1<0$.
First consider the region
\(
2h\leq |x-z|<\frac12.
\)
Using dyadic annuli $A_k$ with radii
\(r_k=2^kh\), i.e.,
\[
A_k
:=
\left\{
z\in\mathbb R^d:
2^k h\leq |x-z|<2^{k+1}h
\right\}
\cap B(x,1/2),
\qquad k\geq 1,
\]
the fact that $|x-z|\sim 2^kh$ on $A_k$,  and the generalized H\"older inequality, we further obtain
\begin{align*}
\int_{\substack{2h\leq |x-z|<1/2}}
\left|
|x-z|^{s-d}-|y-z|^{s-d}
\right||g(z)|\,dz
& \leq C_7\sum_{\substack{k\geq1\\2^kh<1/2}}
 h(2^k h)^{s-d-1}
\int_{A_k}|g(z)|\,dz
\\
&\leq
C_8\|g\|_{L^{p(\cdot)}{(\mathbb R^d)}}
\sum_{\substack{k\geq1\\2^kh<1/2}}
h(2^kh)^{s-d-1+d/p'(x)}\\
&=
C_8\|g\|_{L^{p(\cdot)}(\mathbb R^d)}
\sum_{\substack{k\geq1\\2^kh<1/2}}
h(2^kh)^{\alpha(x)-1}.
\end{align*}
Here the second step is again a consequence of the characteristic function estimate \eqref{eq:characteristic-small-ball} because
\(
A_k\subset B(x,2^{k+1}h)
\)
and \(2^{k+1}h<1\).
Since \(\alpha(x)<s<1\), the geometric series is bounded by
\(C_9h^{\alpha(x)}\). Hence
\begin{equation}\label{eq:intermediate-part}
\int_{\substack{2h\leq |x-z|<1/2}}
\left|
|x-z|^{s-d}-|y-z|^{s-d}
\right||g(z)|\,dz
\leq
C_{10} h^{\alpha(x)}
\|g\|_{L^{p(\cdot)}(\mathbb R^d)}.
\end{equation}

It remains to consider \(|x-z|\geq1/2\). According to \cite[Lem.~3.2.12]{DHHR} for measurable sets
\(E\) with \(|E|\geq1\), one has
\begin{equation}\label{eq:characteristic-large-sets}
\|\chi_E\|_{L^{p'(\cdot)}{(\mathbb R^d)}}
\leq
\tilde{C}|E|^{1/(p')^{-}},
\qquad
{(p')}^{-}=(p^+)'.
\end{equation}
We consider dyadic annuli of radii \(2^k\),  i.e.,
\[
A_k:=
\left\{
z\in\mathbb R^d:
2^k\leq |x-z|<2^{k+1}
\right\}, \qquad k\geq 0
\]
together with the set $A_{-1}:=
\left\{
z\in\mathbb R^d:
\frac12\leq |x-z|<1
\right\}$.
Since \(A_k\subset B(x,2^{k+1})\) and
\(|A_k|\leq C_{11}2^{kd}\), estimate
\eqref{eq:characteristic-large-sets} gives
\[
\|\chi_{A_k}\|_{L^{p'(\cdot)}(\mathbb R^d)}
\leq
C_{12}2^{kd/(p')^{-}}.
\]
The finitely many annuli for which \(|A_k|<1\) can be absorbed into
the constant. Therefore, since
$A_1$  has uniformly bounded measure, using \eqref{MVT} and the generalized H\"older inequality yield
\begin{align*}
\int_{|x-z|\geq1/2}
\left|
|x-z|^{s-d}-|y-z|^{s-d}
\right||g(z)|\,dz
& =C_6 h\sum_{k=-1}^{\infty}\int_{A_k}|x-z|^{s-d-1}|g(z)|dz
\\
&\leq
C_{13}h\|g\|_{L^{p(\cdot)}(\mathbb R^d)}
\left(
1+\sum_{k=0}^\infty
2^{k(s-d-1+d/(p')^-)}
\right)
\\
&=
C_{13}h\|g\|_{L^{p(\cdot)}(\mathbb R^d)}
\left(
1+\sum_{k=0}^\infty
2^{k(s-1-d/p^+)}
\right).
\end{align*}
The series converges because
\(
s-1-\frac{d}{p^+}<0.
\)
Since \(0<h<1\) and \(\alpha(x)<1\), we have
\(h\leq h^{\alpha(x)}\). Thus
\begin{equation}\label{eq:far-part}
\int_{|x-z|\geq1/2}
\left|
|x-z|^{s-d}-|y-z|^{s-d}
\right||g(z)|\,dz
\leq
C_{14} h^{\alpha(x)}
\|g\|_{L^{p(\cdot)}(\mathbb R^d)}.
\end{equation}

Combining \eqref{eq:near-part},
\eqref{eq:intermediate-part}, and \eqref{eq:far-part}, we find
\begin{equation}\label{est-1/4}
|u(x)-u(y)|
\leq
C_{15}|x-y|^{s-d/p(x)}
\|g\|_{L^{p(\cdot)}(\mathbb R^d)}
\end{equation}
whenever \(|x-y|\leq1/4\). Suppose now that \(1/4<|x-y|<1\), and set
\[
x_j:=x+\frac{j}{4}(y-x),\qquad j=0,\ldots,4.
\]
Then \(|x_{j+1}-x_j|=|x-y|/4<1/4\). Applying the preceding estimate \eqref{est-1/4} to each consecutive pair and using \(\alpha(x_j)=s-d/p(x_j)>0\), we obtain
\[
\begin{aligned}
|u(x)-u(y)|
&\le \sum_{j=0}^3|u(x_{j+1})-u(x_j)|\le C_{15} \sum_{j=0}^3\left(\frac{|x-y|}{4}\right)^{\alpha(x_{j})}\|g\|_{L^{p(\cdot)}(\mathbb R^d)}\leq 4C_{15}\|g\|_{L^{p(\cdot)}(\mathbb R^d)}.
\end{aligned}
\]
Moreover, since \(1/4<|x-y|<1\) and \(0<\alpha(x)<s\), we have
\(
|x-y|^{\alpha(x)}\ge 4^{-s} \iff    1\leq 4^s|x-y|^{\alpha(x)}.
\)
Therefore,
\[
|u(x)-u(y)|
\le
4^{s+1}C_{15}\,|x-y|^{s-d/p(x)}
\|g\|_{L^{p(\cdot)}(\mathbb R^d)}.
\]

As in \eqref{eq:est-g-Riesz} we have that
\(
\|g\|_{L^{p(\cdot)}(\mathbb R^d)}
\leq
C'\|\nabla^su\|_{L^{p(\cdot)}(\mathbb R^d;\mathbb R^d)}.
\)
Therefore,
\begin{equation}\label{eq:smooth-morrey}
|u(x)-u(y)|=|I_sg(x)-I_sg(y)|
\leq
C''|x-y|^{s-d/p(x)}
\|\nabla^su\|_{L^{p(\cdot)}(\mathbb R^d;\mathbb R^d)}
\end{equation}
for every \(u\in C_0^\infty(\mathbb R^d)\) and
\(0<|x-y|<1\).
Let now \(u\in X^{s,p(\cdot)}(\mathbb R^d)\). By density, choose
\(u_k\in C_0^\infty(\mathbb R^d)\) such that
\(
u_k\to u
 \text{ in }X^{s,p(\cdot)}(\mathbb R^d).
\)
We show that \((u_k)_k\) converges locally uniformly. Let \(Q\) be a ball
of radius at most \(1/4\), set \(v_{k,\ell}:=u_k-u_\ell\), and write
\[
(v_{k,\ell})_Q
:=
\frac1{|Q|}\int_Qv_{k,\ell}(z)\,dz,
\qquad
\operatorname*{osc}_Qv_{k,\ell}
:=
\sup_{z,w\in Q}|v_{k,\ell}(z)-v_{k,\ell}(w)|.
\]
By \eqref{eq:smooth-morrey}, using $|z-w|^{s-d/p(x)}\leq 1$ we obtain
\[
\operatorname*{osc}_Qv_{k,\ell}
\leq
C''\|\nabla^sv_{k,\ell}\|_{L^{p(\cdot)}(\rn;\rn)}.
\]
Moreover, the generalized H\"older  inequality gives
\[
|(v_{k,\ell})_Q|
\leq
\frac{2}{|Q|}
\|v_{k,\ell}\|_{L^{p(\cdot)}(\rn)}
\|\chi_Q\|_{L^{p'(\cdot)}(\rn)}.
\]
Since
\[
\|v_{k,\ell}\|_{L^\infty(Q)}
\leq \sup_{z\in Q}|v_{k,\ell}(z)-(v_{k,\ell})_Q|+|(v_{k,\ell})_Q|
\leq
\operatorname*{osc}_Qv_{k,\ell}
+|(v_{k,\ell})_Q|,
\]
it follows that
\[
\|v_{k,\ell}\|_{L^\infty(Q)}
\leq
C_Q\|v_{k,\ell}\|_{L^{p(\cdot)}(\rn)}
+
C''\|\nabla^sv_{k,\ell}\|_{L^{p(\cdot)}(\rn)}.
\]
Thus \((u_k)\) is {a} locally uniformly Cauchy {sequence}  and  {consequently}  converges to a continuous
function \(\widetilde u\). Since \(u_k\to u\) in \(L^{p(\cdot)}(\rn)\),
we have \(\widetilde u=u\) almost everywhere. Finally, passing to the limit in \eqref{eq:smooth-morrey} yields
\[
|\widetilde u(x)-\widetilde u(y)|
\leq
C''|x-y|^{s-d/p(x)}
\|\nabla^su\|_{L^{p(\cdot)}(\mathbb R^d;\mathbb R^d)}
\]
for \(0<|x-y|<1\).  Rearranging and taking the supremum over
\(y\in B(x,1)\setminus\{x\}\) proves the assertion.
\end{proof}

We obtain the following Morrey estimate for the Riesz potential which is of interest on its own.

\begin{corollary}[Variable exponent Morrey estimate for the Riesz potential]
    Let $0<s<1$ and let
$p(\cdot)\in\mathcal P^{\log}(\mathbb R^d)$ satisfy
$
sp^->d
$  and $p^+<\infty$.  Moreover,suppose  $g\in L^{p(\cdot)}(\rn)$ and that  $I_s\ast g$ is well-defined.
Then there exists a constant $C>0$ such that for every
$x\in\mathbb R^d$ it holds
\[
\sup_{y\in B(x,1)}
\frac{|I_s g(x)-I_s g(y)|}
{|x-y|^{\,s-\frac{d}{p(x)}}}
\leq
C
\|g\|_{L^{p(\cdot)}(\mathbb R^d)}.
\]
\end{corollary}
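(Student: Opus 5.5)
The corollary can be extracted directly from the proof of the preceding Morrey-type theorem. In that proof the function $u\in C_0^\infty(\mathbb R^d)$ entered only through the representation $u=I_sg$ from Theorem~\ref{thm-frac-FTOC}. All the estimates, namely the near part \eqref{eq:near-part}, the intermediate part \eqref{eq:intermediate-part}, the far part \eqref{eq:far-part} and the chaining argument for $1/4<|x-y|<1$, were carried out for a general $g$ and bounded in terms of $\|g\|_{L^{p(\cdot)}(\mathbb R^d)}$ alone. The plan is therefore to repeat those steps verbatim for an arbitrary $g\in L^{p(\cdot)}(\mathbb R^d)$ and omit the final step that converts $\|g\|_{L^{p(\cdot)}}$ into $\|\nabla^s u\|_{L^{p(\cdot)}}$ via the Riesz transforms.

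Concretely, fix $x$ and $y\in B(x,1)\setminus\{x\}$ with $h=|x-y|\le 1/4$, and write
\[
I_sg(x)-I_sg(y)=\gamma(d,s)\int_{\mathbb R^d}\bigl(|x-z|^{s-d}-|y-z|^{s-d}\bigr)g(z)\,dz .
\]
Split the integral into three regions.
\begin{itemize}
\item On $B(x,2h)$, bound each kernel separately. Use dyadic annuli around $x$ and $y$, the generalized H\"older inequality and \eqref{eq:characteristic-small-ball}. This gives $h^{\alpha(x)}+h^{\alpha(y)}$, and log-H\"older continuity of $1/p$ reduces this to $Ch^{\alpha(x)}$.
\item On $2h\le|x-z|<1/2$, use the mean value bound \eqref{MVT} together with dyadic annuli. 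The resulting geometric series is controlled because $\alpha(x)<1$.
\item On $|x-z|\ge 1/2$, use \eqref{MVT} with \eqref{eq:characteristic-large-sets}. The series converges since $s-1-d/p^+<0$, and $h\le h^{\alpha(x)}$.
\end{itemize}
The case $1/4<|x-y|<1$ then follows by chaining through the four intermediate points $x_j$, exactly as in the theorem.

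The only point needing care is that the integral defining $I_sg(x)$ makes sense pointwise. For $u\in C_0^\infty$ this came from the decay of $g$; here it is part of the hypothesis that $I_s*g$ is well defined. I would interpret this hypothesis as absolute convergence of the integral at the points $x$ and $y$, so that the difference can be written as a single integral. The three-region estimate above then bounds
\[
\int_{\mathbb R^d}\bigl||x-z|^{s-d}-|y-z|^{s-d}\bigr||g(z)|\,dz
\]
by $C|x-y|^{\alpha(x)}\|g\|_{L^{p(\cdot)}(\mathbb R^d)}$. This shows that the difference is finite regardless of any decay of $g$.

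Should $I_sg$ only be defined almost everywhere, I would establish the estimate first for $g\in C_0^\infty$ and then pass to general $g$ by density, using the locally uniform convergence argument from the end of the theorem's proof. That approximation step is the one I expect to be the main technical obstacle. Taking the supremum over $y\in B(x,1)$ then yields the claim.
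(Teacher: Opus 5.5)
Your proposal is correct and matches the paper. The paper states the corollary without a separate proof: it is read off from the estimate \eqref{est-1/4} and the chaining step in the proof of the Morrey-type theorem, where everything is bounded by $\|g\|_{L^{p(\cdot)}(\mathbb R^d)}$ before the Riesz transforms enter. Your density fallback is not needed, because your bound on $\int_{\mathbb R^d}\bigl||x-z|^{s-d}-|y-z|^{s-d}\bigr||g(z)|\,dz$ shows that finiteness of $I_s|g|$ at one point propagates to every point, in particular to the intermediate points $x_j$ used in the chaining; this is consistent with the paper's remark that well-definedness is required at every $x\in\mathbb R^d$.
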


\begin{remark}
The requirement that $I_{s}\ast g$ be well-defined can be made precise as follows: we require  $I_{s}\ast g(x)<\infty$ for every \ $x\in\mathbb R^d$. This holds, in particular, whenever $g$ has compact support, or, more generally, whenever $g\in L^r(\mathbb R^d)$ for some $1\le r<\frac{d}{s}$.
\end{remark}

\section{Fractional Calculus of Variations and PDEs}
\label{sect-4}

 As an application of our nonlocal Sobolev spaces with variable integrability,
we now study a new class of fractional PDEs
which are obtained by minimizing variational problems in our nonlocal fractional Sobolev spaces with variable integrability.

\subsection{Lower weak semicontinuity in variable Lebesgue spaces}

The aim of this section is to establish weak lower semicontinuity of functionals of the form
\[
I(u)=\int_{\mathbb R^d}f(x,\nabla^su(x))\,dx,\qquad u\in X^{s,p(\cdot)}(\mathbb R^d),
\]
where $f:\mathbb R^d\times\mathbb R^d\to\mathbb R\cup\{+\infty\}$ is Borel measurable and, for a.e.\ $x\in\mathbb R^d$, the map $\xi\mapsto f(x,\xi)$ is lower semicontinuous, i.e.\
\[
\liminf_{\nu\to\infty}f(x,\xi_\nu)\ge f(x,\xi)\qquad\text{whenever }\xi_\nu\to\xi.
\]

The main result of this section shows that, under suitable growth assumptions on $f$ together with convexity of $\xi\mapsto f(x,\xi)$, the functional $I$ is weakly lower semicontinuous on $X^{s,p(\cdot)}(\mathbb R^d)$, that is,
\[
u_\nu\rightharpoonup u\ \text{ in }X^{s,p(\cdot)}(\mathbb R^d)
\quad\Longrightarrow\quad
\liminf_{\nu\to\infty}I(u_\nu)\ge I(u).
\]

We start with a result that allows a passage from weak to
strong convergence and can be found in  \cite[Thm.~3.13]{Rud91}. It is a direct consequence of the Hahn-Banach theorem.

\begin{theorem}[Mazur's Theorem]\label{thm-marzur}
Let $(X,\|\cdot\|)$ be a normed space and let
\[
x_\nu \rightharpoonup x \quad \text{in } X.
\]
Then there exists a sequence $\{y_\mu\}_{\mu=1}^\infty \subset \operatorname{co}\{x_\nu\}_{\nu=1}^\infty$ such that
\[
y_\mu \to x \quad \text{in } X,
\]
where $\operatorname{co}\{x_\nu\}_{\nu=1}^\infty$ denotes the convex hull of $\{x_\nu\}_{\nu=1}^\infty$, i.e.\ the set of all finite convex combinations of elements of $\{x_\nu\}_{\nu=1}^\infty$.
\end{theorem}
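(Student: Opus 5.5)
The statement to prove is Mazur's Theorem (Theorem~\ref{thm-marzur}). I would derive it from the Hahn--Banach separation theorem, applied to the norm closure of the convex hull. Set
\[
K:=\overline{\operatorname{co}\{x_\nu\}_{\nu=1}^\infty}^{\|\cdot\|}.
\]
The key step is to show $x\in K$. Once this holds, the definition of closure gives elements $y_\mu\in\operatorname{co}\{x_\nu\}_{\nu=1}^\infty$ with $\|y_\mu-x\|<1/\mu$. Then $y_\mu\to x$ in $X$, as required.

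First I note that $K$ is convex, since the closure of a convex set in a normed space is convex. It is also closed and nonempty.

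Now argue by contradiction and suppose $x\notin K$. I apply the geometric Hahn--Banach theorem, in the form separating a point from a disjoint closed convex set in a (real) normed space. In the complex case I would apply it to the underlying real space and take real parts of functionals. This yields $\ell\in X^*$ and $\gamma\in\mathbb R$ with
\[
\operatorname{Re}\ell(x)<\gamma\le\operatorname{Re}\ell(y)\qquad\text{for all } y\in K.
\]
In particular $\operatorname{Re}\ell(x_\nu)\ge\gamma$ for every $\nu$. Weak convergence $x_\nu\rightharpoonup x$ gives $\ell(x_\nu)\to\ell(x)$. Hence $\operatorname{Re}\ell(x)\ge\gamma$, which is a contradiction. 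Therefore $x\in K$.

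The only nontrivial ingredient is the strict separation theorem. I would take it as known, as the paper does by citing \cite[Thm.~3.13]{Rud91}. Should it need justification, the main obstacle would be producing the separating functional. The standard route is the Minkowski functional of the open convex set $K+B(0,\delta)-x$, where $\delta=\operatorname{dist}(x,K)/2>0$, combined with the analytic Hahn--Banach extension theorem. Everything else is routine.
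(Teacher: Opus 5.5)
Your proof is correct and is essentially the argument behind the reference the paper cites; the paper gives no proof of its own beyond attributing the result to Rudin's Theorem 3.13 as a consequence of Hahn--Banach. That argument is the one you give: show by strict separation that $x$ lies in the norm closure of $\operatorname{co}\{x_\nu\}$, then extract an approximating sequence. One small slip in your optional aside: the Minkowski gauge must be taken of an open convex set containing the origin, and $0\notin K+B(0,\delta)-x$, so translate by a point of $K$ instead (e.g.\ separate the point $x-x_1$ from $K+B(0,\delta)-x_1$).
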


We will use Mazur's Theorem in order to prove the following theorem on lower weak semicontinuity in variable Lebesgue spaces.

\begin{theorem}\label{thm-low-semicont}
Let $\Omega\subset\mathbb{R}^d$ be an open set and let
$p(\cdot)\in\mathcal{P}^{\log}(\Omega)$ satisfy
\(
1<p^- \le p^+<\infty.
\)
Let
\(
f:\Omega\times\mathbb R^d\to\mathbb R\cup\{+\infty\}
\)
be a Borel measurable function  satisfying
\bit
\item \(
    \xi \mapsto f(x,\xi)
    \)
      is lower semicontinuous for almost every $x \in \Omega$,
\item  \(\xi\mapsto f(x,\xi)\) is convex,
\item  the growth condition
\[
f(x,\xi)\ge \langle a(x),\xi\rangle+b(x)
\]
for almost every $x\in\Omega$ and every $\xi\in\mathbb R^d$, where
\(
a\in L^{p'(\cdot)}(\Omega;\mathbb R^d),
\
b\in L^1(\Omega),
\)
and $\langle\cdot,\cdot\rangle$ denotes the Euclidean scalar product.
\eit
Then the functional
\[
J(\xi):=\int_\Omega f(x,\xi(x))\,dx
\]
  is weakly lower semicontinuous in $L^{p(\cdot)}(\Omega;\mathbb R^d)$, i.e., if
\(
\xi_\nu\rightharpoonup\xi
\ \text{in }L^{p(\cdot)}(\Omega;\mathbb R^d)\)
then
\[\liminf_{\nu\to\infty}J(\xi_\nu)\ge J(\xi).
\]
\end{theorem}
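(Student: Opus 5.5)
We follow the classical argument (cf.\ \cite[Ch.~6]{FL07}) and reduce to nonnegative integrands. Set $g(x,\xi):=f(x,\xi)-\langle a(x),\xi\rangle-b(x)\ge 0$. Then $g$ is Borel measurable, convex and lower semicontinuous in $\xi$ for a.e.\ $x$. The affine part is weakly continuous along the sequence. Since $L^{p'(\cdot)}(\Omega;\mathbb R^d)$ embeds into the dual of $L^{p(\cdot)}(\Omega;\mathbb R^d)$, the functional $\xi\mapsto\int_\Omega\langle a,\xi\rangle\,dx$ is a bounded linear functional by the generalized H\"older inequality \cite[Lem.~3.2.20]{DHHR}. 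Hence $\int_\Omega\langle a,\xi_\nu\rangle\,dx\to\int_\Omega\langle a,\xi\rangle\,dx$, and $\int_\Omega b\,dx$ is a fixed finite number. Both $J(\xi_\nu)$ and $J(\xi)$ are well defined in $\mathbb R\cup\{+\infty\}$, since the negative part of $f(\cdot,\xi(\cdot))$ is integrable. It therefore suffices to prove $\liminf_\nu G(\xi_\nu)\ge G(\xi)$ for $G(\eta):=\int_\Omega g(x,\eta(x))\,dx$.

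Let $L:=\liminf_\nu G(\xi_\nu)$, which we may assume finite. Pass to a subsequence, not relabeled, with $G(\xi_\nu)\to L$; it still converges weakly to $\xi$. Fix $\varepsilon>0$ and $N$ such that $G(\xi_\nu)\le L+\varepsilon$ for $\nu\ge N$. Apply Mazur's Theorem~\ref{thm-marzur} to the tail $(\xi_\nu)_{\nu\ge N}$. This gives convex combinations $\eta_\mu=\sum_{\nu=N}^{M_\mu}\lambda^\mu_\nu\xi_\nu$ with $\eta_\mu\to\xi$ in norm in $L^{p(\cdot)}(\Omega;\mathbb R^d)$. Convexity of $g(x,\cdot)$ gives the pointwise bound $g(x,\eta_\mu(x))\le\sum_\nu\lambda^\mu_\nu g(x,\xi_\nu(x))$. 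Integrating yields $G(\eta_\mu)\le L+\varepsilon$ for every $\mu$.

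Next we pass from norm convergence to a.e.\ convergence. Norm convergence in $L^{p(\cdot)}$ with $p^+<\infty$ implies modular convergence, $\int_\Omega|\eta_\mu-\xi|^{p(x)}\,dx\to0$. Since $p^-\ge1$, this gives convergence in measure on sets of finite measure. A diagonal argument over an exhaustion of $\Omega$ by bounded sets then yields a further subsequence with $\eta_\mu\to\xi$ a.e. By lower semicontinuity of $g(x,\cdot)$ we get $g(x,\xi(x))\le\liminf_\mu g(x,\eta_\mu(x))$ for a.e.\ $x$. Fatou's lemma applies since $g\ge0$, giving $G(\xi)\le\liminf_\mu G(\eta_\mu)\le L+\varepsilon$. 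Letting $\varepsilon\to0$ and adding back the affine terms proves the claim.

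The only step needing care is this passage from norm to a.e.\ convergence in the variable exponent setting, together with keeping track of subsequences so that the $\liminf$ is preserved. The rest is routine; the log-H\"older assumption is not even needed here.
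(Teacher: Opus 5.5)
Your proof is correct and follows the paper's route. Both subtract the weakly continuous affine part to get a nonnegative integrand, apply Mazur's theorem to the tail of a subsequence realizing the $\liminf$, use convexity to get $G(\eta_\mu)\le L+\varepsilon$, and conclude by strong lower semicontinuity via an a.e.\ convergent subsequence and Fatou's lemma. The only differences are organizational: the paper isolates strong lower semicontinuity as a separate step and takes the a.e.\ convergent subsequence directly from \cite[Lem.~3.2.10(a)]{DHHR}, whereas you derive it by hand from modular convergence.
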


\begin{remark}
A function $f:\Omega\times\mathbb R^d\to\mathbb R\cup\{+\infty\}$ satisfying the
hypotheses of Theorem \ref{thm-low-semicont}, i.e.\ jointly Borel measurable in
$(x,\xi)$ and, for almost every $x\in\Omega$, lower semicontinuous and convex in
$\xi$, is commonly called a \emph{normal convex integrand}
(see e.g.\ \cite[Ch.~6]{FL07}). The joint measurability of $f$ is exactly what is needed to
ensure that $x\mapsto f(x,\xi(x))$ is Lebesgue measurable for every measurable
$\xi:\Omega\to\mathbb R^d$, so that the functional $J(\xi)=\int_\Omega
f(x,\xi(x))\,dx$ in Theorem \ref{thm-low-semicont} is well defined with values
in $\mathbb R\cup\{+\infty\}$.  If, in addition, $\xi\mapsto f(x,\xi)$ is continuous (rather
than merely lower semicontinuous) for a.e.\ $x$, one calls $f$ a
\emph{Carath\'eodory function}; in that case the measurability of
$x\mapsto f(x,\xi(x))$ is immediate. Note that  our class additionally allows $f(x,\cdot)$ to take the value $+\infty$, which continuity would rule out.
\end{remark}

\begin{proof}
We divide the proof into three steps.

\medskip

\noindent
\textbf{Step 1.}
Let
\[
h(x,\xi):=\langle a(x),\xi\rangle+b(x).
\]
Since
\(
a\in L^{p'(\cdot)}(\Omega;\mathbb R^d),
\
b\in L^1(\Omega),
\)
$h$ is affine-linear in $\xi$,
and
\(
\xi_\nu\rightharpoonup\xi
\ \text{in }
L^{p(\cdot)}(\Omega;\mathbb R^d),
\)
it follows from the definition of weak convergence that
\[
\lim_{\nu\to\infty}
\int_\Omega h(x,\xi_\nu(x))\,dx
=
\int_\Omega h(x,\xi(x))\,dx.
\]

Define
\(
g(x,\xi):=f(x,\xi)-h(x,\xi).
\)
Then \(g\ge0\) and since
$h$ is affine and continuous in
$\xi$,
$g$ inherits lower semicontinuity and convexity in
$\xi$ from
$f$.  Therefore, if we prove that
\[
\liminf_{\nu\to\infty}
\int_\Omega g(x,\xi_\nu(x))\,dx
\ge
\int_\Omega g(x,\xi(x))\,dx,
\]
the theorem follows. Hence, without loss of generality, we may assume
\(f\ge0\).

\medskip

\noindent
\textbf{Step 2.}
We next prove that \(J\) is strongly lower semicontinuous. Assume that
\(
\xi_\nu\to\xi\quad\text{in } L^{p(\cdot)}(\Omega;\mathbb R^d).
\)
Set \(\ell:=\liminf_{\nu\to\infty}J(\xi_\nu)\). By definition of the \(\liminf\), there exists a subsequence \(\{\xi_{\nu_k}\}_{k=1}^\infty\) such that
\[
J(\xi_{\nu_k})\longrightarrow \ell \quad\text{as } k\to\infty.
\]
Since \(\xi_{\nu_k}\to\xi\) in \(L^{p(\cdot)}(\Omega;\mathbb R^d)\) as well, passing to a further subsequence (not relabeled), we may assume that
\[
\xi_{\nu_k}(x)\to\xi(x)\quad\text{for a.e. }x\in\Omega,
\]
see \cite[Lem. 3.2.10(a)]{DHHR}. Since \(f\ge0\), Fatou's lemma yields
\[
\liminf_{k\to\infty}\int_\Omega f(x,\xi_{\nu_k}(x))\,dx
\ge
\int_\Omega \liminf_{k\to\infty} f(x,\xi_{\nu_k}(x))\,dx.
\]
Thus, using the lower semicontinuity of \(\xi\mapsto f(x,\xi)\) for a.e. \(x\in\Omega\), we obtain
\[
\liminf_{k\to\infty} J(\xi_{\nu_k})\ge J(\xi).
\]
Since \(\big(J(\xi_{\nu_k})\big)_k\) converges to \(\ell\), every subsequence of it converges to \(\ell\) as well, so that
\[
\ell = \lim_{k\to\infty}J(\xi_{\nu_k}) = \liminf_{k\to\infty}J(\xi_{\nu_k})\ge J(\xi).
\]
Recalling \(\ell=\liminf_{\nu\to\infty}J(\xi_\nu)\) proves
\[
\liminf_{\nu\to\infty}J(\xi_\nu)\ge J(\xi).
\]

\textbf{Step 3.}
We now pass from strong to weak lower semicontinuity using Mazur's Theorem.
{Assume that \linebreak
\(
\xi_\nu \rightharpoonup \xi\quad\text{in } L^{p(\cdot)}(\Omega;\mathbb R^d)
\) and set}
\[
L:=
\liminf_{\nu\to\infty}
J(\xi_\nu).
\]
Since \(f\ge0\), we have \(L\ge0\). We may assume that
\(L<\infty\), since otherwise there is nothing to prove.
Passing to a subsequence if necessary, we may further assume that
\[
L=
\lim_{\nu\to\infty}
J(\xi_\nu).
\]
Hence, for every \(\varepsilon>0\), there exists
\(\nu_\varepsilon\) such that
\[
J(\xi_\nu)\le L+\varepsilon
\qquad
\text{for every }
\nu\ge\nu_\varepsilon.
\]

Applying Theorem \ref{thm-marzur}  in
\(L^{p(\cdot)}(\Omega;\mathbb R^d)\),
there exists a sequence
\[
\{\eta_\mu\}_{\mu=1}^\infty
\subset
\operatorname{co}
\{\xi_\nu\}_{\nu=\nu_\varepsilon}^\infty
\]
such that
\(
\eta_\mu\to\xi
\ \text{strongly in }
L^{p(\cdot)}(\Omega;\mathbb R^d),
\)
and, for every \(\mu\),
\[
\eta_\mu
=
\sum_{i=\nu_\varepsilon}^{N_\mu}
\alpha_\mu^i\xi_i,
\qquad
\alpha_\mu^i \geq 0,
\qquad
\sum_{i=\nu_\varepsilon}^{N_\mu}
\alpha_\mu^i=1.
\]
Moreover, since \(f(x,\cdot)\) is convex, we deduce
\[
J(\eta_\mu)
=\int_\Omega f(x,\eta_{\mu}(x))\,dx
=\int_\Omega f\left(x,\sum_{i=\nu_\varepsilon}^{N_\mu}
\alpha_\mu^i\xi_i(x)\right)\,dx
\leq \sum_{i=\nu_\varepsilon}^{N_\mu}
\alpha_\mu^i\int_\Omega f (x,\xi_i(x))dx
=
\sum_{i=\nu_\varepsilon}^{N_\mu}
\alpha_\mu^i
J(\xi_i)
\le
L+\varepsilon.
\]

Combining this estimate with Step~2 yields
\[
J(\xi)
\le {\liminf_{\mu\to\infty}J(\eta_\mu) \le }
L+\varepsilon.
\]
Since \(\varepsilon>0\) is arbitrary, we get
\(
J(\xi)
\le
\liminf_{\nu\to\infty}
J(\xi_\nu),
\)
which proves the theorem.
\end{proof}

As an immediate corollary of Theorem \ref{thm-low-semicont} we get  the following result for functionals involving the fractional gradient. Indeed, since \(\nabla^s:X^{s,p(\cdot)}(\Omega)\to L^{p(\cdot)}(\Omega;\mathbb R^d)\) is a bounded linear operator, \(u_\nu\rightharpoonup u\) in \(X^{s,p(\cdot)}(\Omega)\) implies \(\nabla^su_\nu\rightharpoonup\nabla^su\) in \(L^{p(\cdot)}(\Omega;\mathbb R^d)\), so that Theorem \ref{thm-low-semicont} applies with \(\xi_\nu:=\nabla^su_\nu\).
{Note that the spaces on an open subset $\Omega \subset \mathbb{R}^n$ are understood to be defined by restriction.}

\begin{corollary} \label{cor-wls}
Let $\Omega\subset\mathbb{R}^d$ be an open set.
Suppose the assumptions of Theorem \ref{thm-low-semicont} on
$p$ and $f$ hold.
Then the functional
\(
I(u):=
\int_\Omega
f(x,\nabla^s u(x))\,dx
\)
 is weakly lower semicontinuous in $X^{s,p(\cdot)}(\Omega)$.
\end{corollary}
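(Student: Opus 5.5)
The corollary will follow by composing Theorem \ref{thm-low-semicont} with the linear operator $\nabla^s$. The plan is to show that weak convergence in $X^{s,p(\cdot)}(\Omega)$ passes to weak convergence of the fractional gradients in $L^{p(\cdot)}(\Omega;\mathbb R^d)$, and then to invoke the theorem with $\xi_\nu:=\nabla^s u_\nu$ and $\xi:=\nabla^s u$.

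\textbf{Step 1: boundedness of $\nabla^s$.} I will first check that $T:=\nabla^s\colon X^{s,p(\cdot)}(\Omega)\to L^{p(\cdot)}(\Omega;\mathbb R^d)$ is linear and bounded. On $C_0^\infty(\mathbb R^d)$ this is immediate from Definition~\ref{def-Xsp}, since $\|\nabla^s u\|_{L^{p(\cdot)}}\le \|u\|_{X^{s,p(\cdot)}}$. By density it extends to the completion. Here one must verify that the extension agrees with the distributional fractional gradient. I would do this by testing against $v\in C_0^\infty$ and using that $I_{1-s}u_k\to I_{1-s}u$ in $L^1_{\mathrm{loc}}$. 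Local convergence follows from \eqref{eq:bd-Riesz} when $p^+<d/(1-s)$; in general I would use the representation from Theorem~\ref{thm-char-bessel}. For the restriction space on $\Omega$, I compose with the restriction map $L^{p(\cdot)}(\mathbb R^d)\to L^{p(\cdot)}(\Omega)$, which has norm at most one. Here $\nabla^s u$ on $\Omega$ means the restriction of the fractional gradient of a (minimal-norm) extension. I expect this to be the only delicate point, namely making sure $\nabla^s u|_\Omega$ is well defined and controlled by the quotient norm. I would handle it by defining the norm on $X^{s,p(\cdot)}(\Omega)$ as the infimum over extensions, so that the bound holds with constant one.

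\textbf{Step 2: transfer of weak convergence.} Suppose $u_\nu\rightharpoonup u$ in $X^{s,p(\cdot)}(\Omega)$. Any $\phi\in (L^{p(\cdot)}(\Omega;\mathbb R^d))^*$ gives $\phi\circ T\in (X^{s,p(\cdot)}(\Omega))^*$. Hence $\phi(\nabla^s u_\nu)\to\phi(\nabla^s u)$, that is, $\nabla^s u_\nu\rightharpoonup\nabla^s u$ in $L^{p(\cdot)}(\Omega;\mathbb R^d)$. This step is purely functional-analytic.

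\textbf{Step 3: conclusion.} The hypotheses on $p$ and $f$ are exactly those of Theorem~\ref{thm-low-semicont}, so
\[
\liminf_{\nu\to\infty} I(u_\nu)=\liminf_{\nu\to\infty} J(\nabla^s u_\nu)\ge J(\nabla^s u)=I(u).
\]
No further work is needed beyond Step 1, which is where the main obstacle lies.
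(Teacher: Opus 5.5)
Your Steps 2 and 3 are the paper's argument: the paper asserts that $\nabla^s\colon X^{s,p(\cdot)}(\Omega)\to L^{p(\cdot)}(\Omega;\mathbb R^d)$ is bounded and linear, deduces $\nabla^s u_\nu\rightharpoonup\nabla^s u$, and applies Theorem~\ref{thm-low-semicont} with $\xi_\nu:=\nabla^s u_\nu$. The gap is in the device you introduce in Step 1.

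You are right that this is the delicate point for restriction-defined spaces. Because $\nabla^s$ is nonlocal, $(\nabla^s U)|_\Omega$ depends on the extension $U$ of $u$. For instance, if $W\in C_0^\infty(\mathbb R^d)$ is supported in the interior of $\Omega^c$ and $x\in\Omega$, then $\nabla^sW(x)=-c_{d,s}\int W(y)\frac{x-y}{|x-y|^{d+s+1}}\,dy$, which is generically nonzero.

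Choosing the minimal-norm extension does not fix this. The map $u\mapsto U^\ast$ is a nearest-point selection in a coset of the closed subspace $\{W:\ W=0\text{ a.e. on }\Omega\}$. Such a selection is homogeneous but in general not additive, because the norm $\|U\|_{L^{p(\cdot)}}+\|\nabla^sU\|_{L^{p(\cdot)}}$ is not induced by an inner product. Hence your $T$ is not linear, $\phi\circ T\notin (X^{s,p(\cdot)}(\Omega))^\ast$, and Step 2 fails.

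Concretely, from $u_\nu\rightharpoonup u$ and reflexivity you only get $U^\ast_\nu\rightharpoonup V$ along a subsequence, for \emph{some} extension $V$ of $u$. Theorem~\ref{thm-low-semicont} then gives $\liminf I(u_\nu)\ge\int_\Omega f(x,\nabla^sV)\,dx$ along that subsequence. Nothing relates this to $\int_\Omega f(x,\nabla^sU^\ast)\,dx$, which is your definition of $I(u)$.

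The repair is not to choose extensions at all. Take $u_\nu,u\in X^{s,p(\cdot)}(\mathbb R^d)$ with $u_\nu\rightharpoonup u$ there, and set $Tu:=(\nabla^s u)|_\Omega$. This map is linear, and $\|Tu\|_{L^{p(\cdot)}(\Omega;\mathbb R^d)}\le\|\nabla^s u\|_{L^{p(\cdot)}(\mathbb R^d;\mathbb R^d)}\le\|u\|_{X^{s,p(\cdot)}(\mathbb R^d)}$, since the modular over $\Omega$ is dominated by the modular over $\mathbb R^d$. With this $T$, your Steps 2--3 go through verbatim.

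This is the reading under which the paper's one-line argument is valid. It is also the setting used in Theorem~\ref{thm-minimizer}, where $u_n\in X^{s,p(\cdot)}_\varphi(\Omega)\subset X^{s,p(\cdot)}(\mathbb R^d)$. If you insist on the quotient space, you need a fixed bounded \emph{linear} extension operator $E$, and then $I$ depends on $E$.

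Finally, you do not need to identify the completion-defined $\nabla^s$ with the distributional one. On the completion, $\nabla^s$ is by construction the continuous linear extension from $C_0^\infty(\mathbb R^d)$ with norm at most one, and that is all Step 2 uses.
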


\subsection{A nonlocal nonlinear variational problem}

In the nonlinear setting, we first need to define appropriate energy
spaces.

\begin{definition}
Let $\Omega\subset\mathbb R^d$ be open, let
$p(\cdot)\in \mathcal{P}^{\log}(\mathbb R^d)$ satisfy
\(
1<p^- \le p^+<\infty,
\)
and let $\varphi\in X^{s,p(\cdot)}(\mathbb R^d)$.
We define
\[
X^{s,p(\cdot)}_{\varphi}(\Omega)
:=
\left\{
u\in X^{s,p(\cdot)}(\mathbb R^d)
: \
u=\varphi
\text{ in }\Omega^c
\right\}.
\]
\end{definition}

With the energy space defined, we can now state our main theorem concerning the
existence of minimizers of integral functionals of the fractional gradient.

\begin{theorem}[Existence Theorem]\label{thm-minimizer}
Let $p(\cdot)\in \mathcal{P}^{\log}(\mathbb R^d)$ such that
\(
1<p^-\le p^+<\infty
\), $s\in(0,1)$ so that $s<\frac{d}{p^+}$, and define {$p^{\ast}$ as in \eqref{def-p-ast}. Moreover let $\Omega\subset \rn$ be open and bounded,}
$q(\cdot)\in  \mathcal P(\Omega)$  be such that $1 < q^-\leq q(x) < p^{*}(x)$, {  $x\in \Omega$,} and let
$g\in L^{q'(\cdot)}(\Omega)$.
Let $\varphi\in X^{s,p(\cdot)}(\mathbb R^d)$ such that $I_{1-s} * \varphi$ is a well-defined locally integrable function.
Assume that
$f:\mathbb R^d\times\mathbb R^d\to\mathbb R$
 is a Borel measurable function, which is   lower semicontinuous for a.e. $x\in \rn$ (first variable) and convex in the second variable, and that there exist constants $c,C>0$ and functions
$\gamma_1,\gamma_2\in L^1(\mathbb R^d)$ satisfying
\begin{equation}\label{cond-f}
c|z|^{p(x)}+\gamma_1(x)
\le
f(x,z)
\le
\gamma_2(x)+C|z|^{p(x)}
\end{equation}
for all $(x,z)\in\mathbb R^d\times\mathbb R^d$.
Then the functional
\begin{equation}\label{functional-min}
F_s(u)
:=
\int_{\mathbb R^d}
f(x,\nabla^su)\,dx
-
\int_\Omega gu\,dx
\end{equation}
attains its minimum over
$X^{s,p(\cdot)}_{\varphi}(\Omega)$.
\end{theorem}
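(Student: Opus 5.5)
We use the direct method of the calculus of variations on the affine set $X^{s,p(\cdot)}_\varphi(\Omega)=\varphi+X_0$, where
\[
X_0:=\{v\in X^{s,p(\cdot)}(\mathbb R^d):\ v=0 \text{ in }\Omega^c\}.
\]
The plan has four parts: show $F_s$ is finite and bounded below, show minimizing sequences are bounded, extract a weak limit in the admissible class, and pass to the limit in both terms of $F_s$.

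\textbf{Properness.} First, $F_s(\varphi)<\infty$. By the upper bound in \eqref{cond-f}, $\int f(x,\nabla^s\varphi)\,dx\le \|\gamma_2\|_{L^1}+C\varrho_{L^{p(\cdot)}}(\nabla^s\varphi)<\infty$, since $p^+<\infty$ makes the modular finite. The term $\int_\Omega g\varphi\,dx$ is finite by H\"older's inequality and Theorem \ref{Iq}, because $\varphi\in L^{q(\cdot)}(\Omega)$.

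\textbf{Coercivity.} For $u=\varphi+v$ with $v\in X_0$, Theorem \ref{Iq} gives
\[
\Big|\int_\Omega gu\,dx\Big|\le 2\|g\|_{L^{q'(\cdot)}}\|u\|_{L^{q(\cdot)}(\Omega)}\le C_1\|\nabla^s u\|_{L^{p(\cdot)}}.
\]
By the lower bound in \eqref{cond-f}, $F_s(u)\ge c\,\varrho(\nabla^s u)-\|\gamma_1\|_{L^1}-C_1\|\nabla^s u\|_{L^{p(\cdot)}}$. When $\|\nabla^s u\|\ge1$ we have $\varrho(\nabla^s u)\ge\|\nabla^s u\|^{p^-}$ with $p^->1$. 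Hence $F_s(u)\to\infty$ as $\|\nabla^s u\|_{L^{p(\cdot)}}\to\infty$, and $m:=\inf F_s>-\infty$.

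\textbf{Weak compactness.} Take a minimizing sequence $u_\nu$; then $\nabla^s u_\nu$ is bounded in $L^{p(\cdot)}$. To bound the full $X^{s,p(\cdot)}$-norm we also need $\|u_\nu\|_{L^{p(\cdot)}(\mathbb R^d)}$. Write $u_\nu-\varphi=v_\nu$, which is supported in $\overline\Omega$. Theorem \ref{Iq} with $q=p$ applies since $p<p^*$, and gives $\|v_\nu\|_{L^{p(\cdot)}(\Omega)}\le C\|\nabla^s v_\nu\|_{L^{p(\cdot)}}\le C(\|\nabla^s u_\nu\|+\|\nabla^s\varphi\|)$. Hence $(u_\nu)$ is bounded in $X^{s,p(\cdot)}(\mathbb R^d)$.

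This space is reflexive, being isometric to a closed subspace of $L^{p(\cdot)}\times L^{p(\cdot)}(\mathbb R^d;\mathbb R^d)$ via $u\mapsto(u,\nabla^s u)$, and $L^{p(\cdot)}$ is reflexive for $1<p^-\le p^+<\infty$. So a subsequence converges weakly to some $u$. The constraint set $X^{s,p(\cdot)}_\varphi(\Omega)$ is convex and strongly closed, because $L^{p(\cdot)}$ convergence gives a.e.\ convergence along a subsequence, so vanishing of $v$ off $\Omega$ persists. By Mazur's theorem it is therefore weakly closed, and $u\in X^{s,p(\cdot)}_\varphi(\Omega)$.

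\textbf{Limit passage.} The first term of $F_s$ is handled by Corollary \ref{cor-wls} with $\Omega=\mathbb R^d$. The growth hypothesis of Theorem \ref{thm-low-semicont} holds with $a=0$ and $b=\gamma_1$, and $f$ is finite, convex and lower semicontinuous. This gives $\liminf\int f(x,\nabla^s u_\nu)\ge\int f(x,\nabla^s u)$. For the linear term, $v\mapsto\int_\Omega g v\,dx$ is a bounded linear functional on $X^{s,p(\cdot)}(\mathbb R^d)$ by Theorem \ref{Iq} and H\"older's inequality, so it is weakly continuous. Combining both parts, $F_s(u)\le\liminf F_s(u_\nu)=m$, and $u$ is a minimizer.

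The hypothesis that $I_{1-s}*\varphi$ is well-defined only ensures that $\nabla^s$ makes sense for admissible functions. The main obstacle I expect is the control of $\|u_\nu\|_{L^{p(\cdot)}(\mathbb R^d)}$ outside $\Omega$, together with the reflexivity of $X^{s,p(\cdot)}$. I will handle these through the decomposition $u_\nu=\varphi+v_\nu$ and the closed-subspace embedding described above.
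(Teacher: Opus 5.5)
Your argument is correct. It shares the paper's skeleton: finiteness of $F_s(\varphi)$, coercivity from the lower bound in \eqref{cond-f} together with Theorem~\ref{Iq}, Theorem~\ref{thm-low-semicont} with $a=0$ and $b=\gamma_1$, and weak continuity of $u\mapsto\int_\Omega gu\,dx$. Where you differ is in how the limit is produced and identified.

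The paper never uses reflexivity of $X^{s,p(\cdot)}(\mathbb R^d)$. It takes separate weak limits $u_n-\varphi\rightharpoonup u-\varphi$ in $L^{q(\cdot)}(\Omega)$ and $\nabla^su_n\rightharpoonup v$ in $L^{p(\cdot)}(\mathbb R^d;\mathbb R^d)$. It gets $u\in L^{p(\cdot)}(\mathbb R^d)$ from Theorem~\ref{fractionalSI} and the embedding $L^{p^*(\cdot)}(\Omega)\hookrightarrow L^{p(\cdot)}(\Omega)$. It then shows $v=\nabla^su$ by testing with $w\in C_0^\infty(\mathbb R^d)$ and moving $I_{1-s}$ onto $\partial_jw$ via Fubini; this is where the hypothesis that $I_{1-s}*\varphi$ is locally integrable is used.

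You instead bound the full $X^{s,p(\cdot)}$-norm; Theorem~\ref{Iq} with $q=p|_\Omega$ is legitimate because $p<p^*$. You then view $X^{s,p(\cdot)}(\mathbb R^d)$ as the closure of the graph $\{(u,\nabla^su):u\in C_0^\infty(\mathbb R^d)\}$ inside the reflexive space $L^{p(\cdot)}\times L^{p(\cdot)}(\mathbb R^d;\mathbb R^d)$. Admissibility of the limit then follows from weak closedness of the closed affine set $\varphi+X_0$.

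Your route buys two things. First, the hypothesis on $I_{1-s}*\varphi$ becomes unnecessary. Second, the limit lies in $X^{s,p(\cdot)}(\mathbb R^d)$ automatically. The paper's concluding sentence (``$u\in L^{p(\cdot)}$ and $\nabla^su\in L^{p(\cdot)}$, hence $u\in X^{s,p(\cdot)}(\mathbb R^d)$'') relies on such functions belonging to the closure of $C_0^\infty(\mathbb R^d)$. That is not proved there, and your Mazur-type closedness argument is exactly what settles that step.

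The paper's route, in turn, works directly with the distributional definition of $\nabla^s$. You use implicitly that $\nabla^s$ is closable on $C_0^\infty(\mathbb R^d)$, so that elements of the completion are genuine functions with a well-defined fractional gradient. This follows from the same duality computation, since $I_{1-s}*\partial_jw$ is bounded and $O(|x|^{-d-s})$ at infinity, hence lies in $L^{p'(\cdot)}(\mathbb R^d)$.
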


\begin{proof}
Define
\[
C_s
:=
\inf_{u\in X^{s,p(\cdot)}_{\varphi}(\Omega)}
\left(
\int_{\mathbb R^d}
f(x,\nabla^su)\,dx
-
\int_\Omega gu\,dx
\right).
\]
Since $\varphi\in X^{s,p(\cdot)}_\varphi(\Omega)$, we have $C_s\le F_s(\varphi)$, and using \eqref{cond-f} together with the generalized H\"older inequality we obtain
\begin{align*}
C_s
\le
F_s(\varphi)
&=
\int_{\mathbb R^d} f(x,\nabla^s\varphi)\,dx
-
\int_\Omega g\varphi\,dx
\\
&\le
\|\gamma_2\|_{L^1(\mathbb R^d)}
+
C\int_{\mathbb R^d}|\nabla^s\varphi|^{p(x)}\,dx
+
\int_\Omega |g\varphi|\,dx
\\
&\le
\|\gamma_2\|_{L^1(\mathbb R^d)}
+
C\int_{\mathbb R^d}|\nabla^s\varphi|^{p(x)}\,dx
+
2\|g\|_{L^{q'(\cdot)}(\Omega)}\,\|\varphi\|_{L^{q(\cdot)}(\Omega)}
\ <\ \infty,
\end{align*}
where the last line is finite since $\varphi\in X^{s,p(\cdot)}(\mathbb R^d)$ and $\varphi\in L^{q(\cdot)}(\Omega)$ by Theorem \ref{Iq}.
Hence there exists a minimizing sequence
$\{u_n\}\subset X^{s,p(\cdot)}_\varphi(\Omega)$ satisfying

\begin{equation}\label{Cs-limit}
C_s
=
\lim_{n\to\infty}
\left(
\int_{\mathbb R^d}
f(x,\nabla^su_n)\,dx
-
\int_\Omega gu_n\,dx
\right).
\end{equation}
We claim that the sequences $(u_n)$ and $(\nabla^s u_n)$ are  bounded in $L^{q(\cdot)}(\Omega)$ and
$L^{p(\cdot)}(\rn;\rn)$, respectively.
The first claim is a consequence of the second, by Theorem \ref{Iq}.
First we should note that, similarly as above and using Young's inequality with $\varepsilon>0$, it holds
\begin{eqnarray}
\int_{\Omega} g u_n \; dx
&\leq & 2 \|g\|_{L^{q'(\cdot)}(\Omega)}
\| u_n\|_{L^{q(\cdot)}(\Omega)}\notag\\
&\leq & c' \|g\|_{L^{q'(\cdot)}(\Omega)}
\|\nabla^s u_n\|_{L^{p(\cdot)}(\rn;\rn)}
\le  \varepsilon \, \|\nabla^s u_n\|_{L^{p(\cdot)}(\rn;\rn)}^{p^-}+  C(\varepsilon) \, \|g\|_{L^{q'(\cdot)}(\Omega)}^{(p^-)'} \label{help-ineq}
\end{eqnarray}
with positive constants   $c(\varepsilon)$ and $C(\varepsilon)$, such that $c(\varepsilon)$  can be chosen as small as we want.
Then, using \eqref{cond-f} we can deduce that, for any $n$ sufficiently large and in case $\|\nabla^su_n\|_{L^{p(\cdot)}(\rn;\rn)} >1$,
\[
\begin{aligned}
C_s+1
&\ge
c
\int_{\mathbb R^d}
|\nabla^su_n|^{p(x)}\,dx
+
\int_{\mathbb R^d}
\gamma_1(x)\,dx
-
\int_\Omega gu_n\,dx
\\
&\ge
c\;
\|\nabla^s u_n\|^{p^-}_{L^{p(\cdot)}(\rn;\rn)}
+
\|
\gamma_1  \|_{L^{1}(\rn)}
-\varepsilon \, \|\nabla^s u_n\|_{L^{p(\cdot)}(\rn;\rn)}^{p^-} - C(\varepsilon) \, \|g\|_{L^{q'(\cdot)}(\Omega)}^{(p^-)'},
\end{aligned}
\]
where the second step we used  \cite[Lem.~3.2.5]{DHHR} to estimate the first term on the right hand side and  \eqref{help-ineq} for the last.
Choosing $\varepsilon$ sufficiently small yields the boundedness of the sequence $(\nabla^s u_n)$ in $L^{p(\cdot)}(\mathbb{R}^d;\rn)$.

Since the spaces $L^{q(\cdot)}(\Omega)$ and
$L^{p(\cdot)}(\rn)$ are reflexive,
cf. \cite[Theorem 3.4.7]{DHHR},  the above mentioned bounded sequences admit weak convergent subsequences (see e.g. \cite[Theorem 3.18]{Brezis2011}).

Since $u_n=\varphi$ on $\Omega^c$ by definition, we have $\operatorname{supp}(u_n-\varphi)\subset\overline\Omega$, so that $(u_n-\varphi)$, extended by zero, is also bounded in $L^{q(\cdot)}(\rn)$. Hence, up to a subsequence, there exist $u\in L^{q(\cdot)}(\Omega)$ and $v\in L^{p(\cdot)}(\rn;\rn)$ such that
\[
u_n-\varphi
\rightharpoonup
u-\varphi
\quad\text{in }
L^{q(\cdot)}(\Omega) \quad \text{ and } \quad \nabla^su_n
\rightharpoonup
v
\quad\text{in }
L^{p(\cdot)}(\rn;\rn).
\]
We postpone asserting $u\in X^{s,p(\cdot)}_\varphi(\Omega)$ until we have shown $v=\nabla^su$ below; at this stage we only know $u\in L^{q(\cdot)}(\Omega)$, with $u=\varphi$ on $\Omega^c$.
To see that in fact $u\in L^{p(\cdot)}(\rn)$ -- needed for $\nabla^su$ to be well-defined per Definition~\ref{def-Xsp} -- note that since $u_n-\varphi\in X^{s,p(\cdot)}(\rn)$ with $\nabla^s(u_n-\varphi)$ bounded in $L^{p(\cdot)}(\rn;\rn)$, the global Sobolev-type inequality of Theorem~\ref{fractionalSI} gives
\[
\|u_n-\varphi\|_{L^{p^*(\cdot)}(\rn)}\le C\,\|\nabla^s(u_n-\varphi)\|_{L^{p(\cdot)}(\rn;\rn)}\le C',
\]
uniformly in $n$. Since $p(x)<p^*(x)$ for every $x$ and $\Omega$ is bounded, the embedding $L^{p^*(\cdot)}(\Omega)\hookrightarrow L^{p(\cdot)}(\Omega)$ shows that $(u_n-\varphi)$ is also bounded in $L^{p(\cdot)}(\Omega)$; identifying the limit of a further subsequence with $u-\varphi$, we conclude $u-\varphi\in L^{p(\cdot)}(\Omega)$, hence $u=\varphi+(u-\varphi)\in L^{p(\cdot)}(\rn)$.

It remains to show that $v=\nabla^su$,
after which weak lower semicontinuity yields the desired conclusion.  Precisely, if we can establish this claim, from \eqref{Cs-limit} we conclude
\[
\begin{aligned}
C_s=\lim_{n\to\infty}
\left(
\int_{\mathbb R^d}
f(x,\nabla^su_n)\,dx
-
\int_\Omega gu_n\,dx
\right)
&\ge
\liminf_{n\to\infty}
\int_{\mathbb R^d}
f(x,\nabla^su_n)\,dx
-
\limsup_{n\to\infty}
\int_\Omega gu_n\,dx
\\
&\ge
\int_{\mathbb R^d}
f(x,\nabla^su)\,dx
-
\int_\Omega gu\,dx .
\end{aligned}
\]

Here we have used the weak lower semicontinuity of the first term,  guaranteed by Theorem \ref{thm-low-semicont}, together
with the weak continuity of the linear functional
\(
u\mapsto\int_\Omega gu\,dx
\),
which follows from the duality
\(
\big(L^{q(\cdot)}(\Omega)\big)^*
=
L^{q'(\cdot)}(\Omega).
\)

It remains to prove that
\[
v=\nabla^su.
\]

For
\(
w\in C_0^\infty(\mathbb R^d)
\),
the definition of the fractional gradient gives, for $j\in\{1, \dots, d\}$,
\[
\left\langle
\frac{\partial^s}{\partial x_j^s}(u_n-\varphi),
w
\right\rangle
=
-
\int_{\mathbb R^d}
I_{1-s}*(u_n-\varphi)
\frac{\partial w}{\partial x_j}\,dx
=
-
\int_\Omega
(u_n-\varphi)
I_{1-s}*\frac{\partial w}{\partial x_j}\,dx,{\footnotemark}
\]
 since
\(
\operatorname{supp}(u_n-\varphi)\subset\overline{\Omega}
\)\textcolor{red}{\footnotetext{The order of integration may be exchanged by Fubini's theorem: since the kernel of the Riesz potential $
\kappa(x):=\gamma(d,1-s)\,|x|^{(1-s)-d}\in L^1_{\mathrm{loc}}(\mathbb R^d)$ (since $1-s>0$) and $\partial_{x_j}w$ is bounded with compact support, $\int_{\mathbb R^d}|\kappa(x-y)|\,|\partial_{x_j}w(x)|\,dx$ is bounded uniformly for $y$ in the bounded set $\overline\Omega$; together with $u_n-\varphi\in L^1(\Omega)$, this gives absolute convergence of the double integral.}}.
Passing to the limit using the weak convergence in
\(
L^{q(\cdot)}(\mathbb R^d)
\),
we obtain
\[
-
\lim_{n\to\infty}
\int_\Omega
(u_n-\varphi)
I_{1-s}*\frac{\partial w}{\partial x_j}\,dx
=
-
\int_{\mathbb R^d}
(u-\varphi)
I_{1-s}*\frac{\partial w}{\partial x_j}\,dx.{\footnotemark}
\]
\footnotetext{Since $\partial_{x_j}w\in C_0^\infty(\mathbb R^d)$, the function $I_{1-s}*\partial_{x_j}w$ is bounded and continuous on $\mathbb R^d$ (its Fourier transform $c|\xi|^{-(1-s)}\widehat{\partial_{x_j}w}(\xi)$ is $L^1$, being locally integrable near $\xi=0$ since $1-s<d$, and rapidly decaying at infinity). Restricted to the bounded set $\Omega$, it therefore belongs to $L^{q'(\cdot)}(\Omega)$, so the limit passes through by definition of the weak convergence $u_n-\varphi\rightharpoonup u-\varphi$ in $L^{q(\cdot)}(\Omega)$ and the duality $(L^{q(\cdot)}(\Omega))^*=L^{q'(\cdot)}(\Omega)$. The domain is then extended to $\mathbb R^d$ on the right using $u-\varphi=0$ a.e.\ on $\Omega^c$.}
Moving the Riesz potential back to the other factor yields
\[
\lim_{n\to\infty}
\left\langle
\frac{\partial^s}{\partial x_j^s}(u_n-\varphi),
w
\right\rangle
=
-
\int_{\mathbb R^d}
I_{1-s}*u
\frac{\partial w}{\partial x_j}\,dx
+
\int_{\mathbb R^d}
I_{1-s}*\varphi
\frac{\partial w}{\partial x_j}\,dx.
\]

On the other hand,
\[
\left\langle
\frac{\partial^s}{\partial x_j^s}(u_n-\varphi),
w
\right\rangle
=
\left\langle
\frac{\partial^s u_n}{\partial x_j^s},
w
\right\rangle
-
\left\langle
\frac{\partial^s\varphi}{\partial x_j^s},
w
\right\rangle.
\]

Since
\(
\partial_{x_j}^su_n
\rightharpoonup
v_j
\)
in
\(
L^{p(\cdot)}(\mathbb R^d)
\),
we have
\[
\lim_{n\to\infty}
\left\langle
\frac{\partial^s}{\partial x_j^s}(u_n-\varphi),
w
\right\rangle
=
\int_{\mathbb R^d}
v_jw\,dx
-
\left\langle
\frac{\partial^s\varphi}{\partial x_j^s},
w
\right\rangle.
\]

Comparing the two identities and cancelling the fractional derivatives of
\(
\varphi
\),
we conclude that
\[
v_j
=
\frac{\partial^su}{\partial x_j^s},
\]
for every
\(j=1,\dots,d\). Since $u\in L^{p(\cdot)}(\mathbb R^d)$ and $\nabla^su=v\in L^{p(\cdot)}(\mathbb R^d;\rn)$, we conclude $u\in X^{s,p(\cdot)}(\mathbb R^d)$; together with $u=\varphi$ on $\Omega^c$, this gives $u\in X^{s,p(\cdot)}_\varphi(\Omega)$, which completes the proof.
\end{proof}

\begin{remark}[Uniqueness]\label{rem-uniqueness}
If, in addition to the assumptions of Theorem~\ref{thm-minimizer}, the map
$z\mapsto f(x,z)$ is strictly convex for a.e.\ $x\in\mathbb R^d$, then the
minimizer of \eqref{functional-min} over $X^{s,p(\cdot)}_\varphi(\Omega)$ is
unique. Indeed, let $u_1,u_2$ be two minimizers. Since
$X^{s,p(\cdot)}_\varphi(\Omega)$ is convex, $w:=\frac12(u_1+u_2)\in
X^{s,p(\cdot)}_\varphi(\Omega)$, and $F_s(u_1)=F_s(u_2)=C_s$ is finite by
\eqref{cond-f}. If $\nabla^su_1\neq\nabla^su_2$ on a set of positive measure,
strict convexity of $f(x,\cdot)$ and linearity of the second term in
\eqref{functional-min} give $F_s(w)<\frac12\big(F_s(u_1)+F_s(u_2)\big)=C_s$,
a contradiction. Hence $\nabla^s(u_1-u_2)=0$ a.e., and since
$u_1-u_2\in X^{s,p(\cdot)}(\mathbb R^d)$, Theorem~\ref{fractionalSI} yields
$u_1=u_2$ a.e. 
\end{remark}

Having established the existence of a minimizer in Theorem \ref{thm-minimizer}, we now derive its weak Euler--Lagrange equation,
under the additional assumption that $f$ is continuously differentiable in its second variable.

\begin{theorem}[Euler--Lagrange equation]\label{thm-Euler-Lagrange}
Let $p(\cdot)\in\mathcal P^{\log}(\mathbb R^d)$ satisfy
\(
1<p^-\le p^+<\infty
\), $s\in(0,1)$, $s<\frac{d}{p^+}$, and define $p^{\ast}$ as in \eqref{def-p-ast} above. Let $\varphi\in X^{s,p(\cdot)}(\mathbb R^d)$ be such that $I_{1-s}*\varphi$ is a well-defined locally integrable function.
Moreover,
let $\Omega\subset \rn$ be open and bounded, let $q(\cdot)\in{\mathcal P}(\Omega)$ be such that $1 < q^-\leq q(x) < p^{*}(x)$, {$x\in \Omega$, and let
$g\in L^{q'(\cdot)}(\Omega)$.
Suppose that $f$ satisfies the assumptions of Theorem~\ref{thm-minimizer} and is, in addition, continuously differentiable in the second variable, and that there exist $a\in L^{p'(\cdot)}(\mathbb R^d)$ and $C_1>0$ such that
\[
|\nabla_zf(x,z)|\le a(x)+C_1|z|^{p(x)-1}
\]
for a.e.\ $x\in\mathbb R^d$ and every $z\in\mathbb R^d$.
If
\(
u\in X^{s,p(\cdot)}_\varphi(\Omega)
\)
is a minimizer of \eqref{functional-min}
then
\begin{equation}\label{Euler-Lagrange-eq}
\int_{\mathbb R^d}
\nabla_zf(x,\nabla^su)\cdot \nabla^sv\,dx
=
\int_\Omega gv\,dx
\end{equation}
for every
$v\in C_0^\infty(\Omega)$.}
\end{theorem}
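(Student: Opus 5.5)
The argument is the standard first-variation one, with the growth bound on $\nabla_z f$ supplying the domination. Fix $v\in C_0^\infty(\Omega)$ and $t\in\mathbb R$ with $0<|t|\le 1$. Then $u+tv\in X^{s,p(\cdot)}_\varphi(\Omega)$, because $v\in C_0^\infty(\mathbb R^d)\subset X^{s,p(\cdot)}(\mathbb R^d)$ and $v=0$ on $\Omega^c$. Minimality gives $F_s(u+tv)\ge F_s(u)$. By \eqref{cond-f} both values are finite: $\nabla^s u,\nabla^s v\in L^{p(\cdot)}$ and $p^+<\infty$, so the modular is finite, and $\int_\Omega g(u+tv)\,dx$ is finite by Theorem \ref{Iq} together with the generalized H\"older inequality. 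Since the $g$-term is linear in $t$, after dividing by $t$ it contributes exactly $-\int_\Omega gv\,dx$. It remains to show
\[
\lim_{t\to0}\int_{\mathbb R^d}\frac{f(x,\nabla^su+t\nabla^sv)-f(x,\nabla^su)}{t}\,dx=\int_{\mathbb R^d}\nabla_zf(x,\nabla^su)\cdot\nabla^sv\,dx .
\]
Once this holds, taking $t\downarrow0$ gives ``$\ge0$'', taking $t\uparrow0$ gives ``$\le0$'', and together they yield \eqref{Euler-Lagrange-eq}.

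\textbf{Pointwise limit.} Because $f(x,\cdot)$ is $C^1$, the difference quotient converges for a.e.\ $x$ to $\nabla_zf(x,\nabla^su(x))\cdot\nabla^sv(x)$. Measurability of all integrands follows from the Borel measurability of $f$.

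\textbf{Domination, the main step.} By the mean value theorem there is $\theta=\theta(x,t)\in(0,1)$ with
\[
\Big|\frac{f(x,\nabla^su+t\nabla^sv)-f(x,\nabla^su)}{t}\Big|\le |\nabla_zf(x,\nabla^su+\theta t\nabla^sv)|\,|\nabla^sv|\le \big(a(x)+C_1(|\nabla^su|+|\nabla^sv|)^{p(x)-1}\big)|\nabla^sv| .
\]
This uses $|t|\le1$ and the monotonicity of $r\mapsto r^{p(x)-1}$, valid since $p(x)\ge1$. The right-hand side does not depend on $t$. We check it is integrable in two pieces.

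For the first piece, $a\in L^{p'(\cdot)}$ and $|\nabla^sv|\in L^{p(\cdot)}$, so $a|\nabla^s v|\in L^1$ by the generalized H\"older inequality \cite[Lem.~3.2.20]{DHHR}.

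For the second piece, set $W:=|\nabla^su|+|\nabla^sv|\in L^{p(\cdot)}$. Since $p^+<\infty$ we have $\int W^{p(x)}dx<\infty$. Then $(W^{p(x)-1})^{p'(x)}=W^{p(x)}$, so $W^{p(\cdot)-1}\in L^{p'(\cdot)}$ with finite modular. Here one must take care where $p(x)=1$, but $p^->1$ so this case does not occur. H\"older's inequality then gives $W^{p(\cdot)-1}|\nabla^sv|\in L^1$. Alternatively, one can use the pointwise Young inequality $W^{p-1}|\nabla^sv|\le \frac{1}{p'}W^{p}+\frac1p|\nabla^sv|^{p}$, which avoids norms entirely.

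\textbf{Conclusion.} Dominated convergence now justifies the limit above, and the proof is complete.

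I expect the only real subtlety to be precisely this domination: one has to notice that the bound must be uniform in $t$, which is why $t$ is restricted to $|t|\le1$, and one has to verify the modular integrability of $W^{p(\cdot)-1}$ in $L^{p'(\cdot)}$. Both are handled by the pointwise Young inequality.
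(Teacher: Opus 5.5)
Your proposal is correct and follows the paper's proof essentially step for step: admissibility of $u+tv$, finiteness of $F_s(u+tv)$ via \eqref{cond-f}, domination of the difference quotient for $|t|\le1$ by $\bigl(a+C_1(|\nabla^su|+|\nabla^sv|)^{p(\cdot)-1}\bigr)|\nabla^sv|$, integrability of this bound via $(p(x)-1)p'(x)=p(x)$ and H\"older, and then dominated convergence. The only differences are cosmetic: you use the pointwise mean value theorem where the paper writes the quotient as $\int_0^1\nabla_zf(x,\nabla^su+\theta t\nabla^sv)\cdot\nabla^sv\,d\theta$ and applies Fubini and dominated convergence on $\mathbb R^d\times(0,1)$, and you keep $(|\nabla^su|+|\nabla^sv|)^{p(\cdot)-1}$ intact rather than splitting it into two terms.
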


\begin{proof}
Let $u$ be a minimizer of $F_s$, { defined in \eqref{functional-min},} and let
$v\in C_0^\infty(\Omega)$. {For $t\in\mathbb R$ let $I(t):=F_s({u+tv})$, which is finite for every $t\in\mathbb R$ by the upper bound in \eqref{cond-f}, exactly as in the $C_s<\infty$ argument in the proof of Theorem \ref{thm-minimizer}.}
Since
\(
u+tv\in X^{s,p(\cdot)}_\varphi(\Omega)
\)
for every $t\in\mathbb R$, the minimality of $u$ implies
\[
I(0)=\min_{t\in\mathbb R}I(t).
\]
Thus it suffices to prove that $I$ is differentiable at $t=0$.
We write
\[
\begin{aligned}
\frac{I(t)-I(0)}{t}
={}&
\int_{\mathbb R^d}
\frac{
f(x,\nabla^su+t\nabla^sv)-f(x,\nabla^su)
}{t}
\,dx
-
\int_\Omega gv\,dx.
\end{aligned}
\]

Since $f$ is continuously differentiable with respect to its second
variable, the  {fundamental theorem of calculus (applied to $\theta \mapsto f(x, \nabla^su+\theta t\nabla^s v)$)}
gives
\[
\frac{
f(x,\nabla^su+t\nabla^sv)-f(x,\nabla^su)
}{t}
=
\int_0^1
\nabla_zf\!\left(
x,\nabla^su+\theta t\nabla^sv
\right)
\cdot \nabla^sv
\,d\theta.
\]

Hence,
\[
\frac{I(t)-I(0)}{t}
=
\int_0^1
\int_{\mathbb R^d}
\nabla_zf\!\left(
x,\nabla^su+\theta t\nabla^sv
\right)
\cdot \nabla^sv
\,dx\,d\theta
-
\int_\Omega gv\,dx,
\]
{where Fubini's theorem applies since, as the estimate below shows, the integrand is dominated by an integrable function on $\mathbb R^d\times(0,1)$.}
We need to justify the passage to the limit $t\to0$.
By assumption,
\[
|\nabla_zf(x,z)|
\le
a(x)+{C_1}|z|^{p(x)-1},
\]
where
$a\in L^{p'(\cdot)}(\mathbb R^d)$.
Fix $|t|\le1$.
Since
\[
|\nabla^su+\theta t\nabla^sv|
\le
|\nabla^su|+|\nabla^sv|,
\]
using
\[
(|\xi|+|\eta|)^{p(x)-1}
\le
{ \max\{2^{p^+-2},1\}}
\left(
|\xi|^{p(x)-1}
+
|\eta|^{p(x)-1}
\right),
\]
we obtain
\[
\begin{aligned}
&
\left|
\nabla_zf\!\left(
x,\nabla^su+\theta t\nabla^sv
\right)
\cdot \nabla^sv
\right|
\le
\Bigl(
a(x)
+
{C_2}
\bigl(
|\nabla^su|^{p(x)-1}
+
|\nabla^sv|^{p(x)-1}
\bigr)
\Bigr)
|\nabla^sv|.
\end{aligned}
\]
The right-hand side belongs to $L^1(\mathbb R^d)$, since by the variable exponent Hölder inequality,
\[
\int_{\mathbb R^d}
a(x)|\nabla^sv|
\,dx
\le
2
\|a\|_{L^{p'(\cdot)}(\rn)}
\|\nabla^sv\|_{L^{p(\cdot)}(\rn)}.
\]

Furthermore, {by the definition of the conjugate exponent, it holds $(p(x)-1)\,p'(x)=p(x)$, thus,
\[
\rho_{p'(\cdot)}\!\left(|\nabla^su|^{p(\cdot)-1}\right)=\rho_{p(\cdot)}(\nabla^su)<\infty.
\]}
Hence
\(
|\nabla^su|^{p(\cdot)-1}\in L^{p'(\cdot)}(\mathbb R^d)
\)
and another application of Hölder's inequality yields
\[
\int_{\mathbb R^d}
|\nabla^su|^{p(x)-1}|\nabla^sv|
\,dx
<\infty,
\]
as well as
\[
\int_{\mathbb R^d}
|\nabla^sv|^{p(x)}
\,dx
=
\rho_{p(\cdot)}(\nabla^sv)
<\infty.
\]

Therefore the integrand is dominated by an $L^1$-function independent of
$t$.
Since
\[
\nabla_zf\!\left(
x,\nabla^su+\theta t\nabla^sv
\right)
\longrightarrow
\nabla_zf(x,\nabla^su)
\]
pointwise almost everywhere as $t\to0$,{\footnote{{Since $\theta=\theta(x,t)\in(0,1)$, we have $\theta t\,\nabla^sv(x)\to0$ as $t\to0$ for a.e.\ $x$, and $z\mapsto\nabla_zf(x,z)$ is continuous by assumption.}}} the dominated convergence theorem (applied on the product space $\rn\times (0,1)$; note that the limit of the integrand is independent of $\theta$)
gives
\[
\begin{aligned}
I'(0)
={}&
\int_{\mathbb R^d}
\nabla_zf(x,\nabla^su)\cdot \nabla^sv
\,dx
-
\int_\Omega gv\,dx.
\end{aligned}
\]

Finally, since $I$ attains its minimum at $t=0$, we conclude that
\(
I'(0)=0,
\)  which yields \eqref{Euler-Lagrange-eq}.
\end{proof}

As an immediate corollary, we now derive the solutions to a fractional $p(\cdot)$-Laplace equation within this setting.

\begin{corollary}[Fractional $p(\cdot)$-Laplace equation]
Let $p(\cdot)\in\mathcal P^{\log}(\mathbb R^d)$ satisfy $1<p^-\le p^+<\infty$, let $s\in(0,1)$ be such that $s<\frac{d}{p^+}$, and define $p^*$ as in \eqref{def-p-ast}. Let $\Omega\subset\mathbb R^d$ be open and bounded, and let $\varphi\in X^{s,p(\cdot)}(\mathbb R^d)$ be such that $I_{1-s}*\varphi$ is a well-defined locally integrable function. Let $q(\cdot)\in{\mathcal P(\Omega)}$ be such that $1<q^-\le q(x)<p^*(x)$ for $x\in\Omega$, and let $g\in L^{q'(\cdot)}(\Omega)$.
Then there exists $u\in X^{s,p(\cdot)}_\varphi(\Omega)$ such that
\[
\int_{\mathbb R^d}
|\nabla^su|^{p(x)-2}
\nabla^su\cdot \nabla^sv\,dx
=
\int_\Omega gv\,dx
\]
for every $v\in C_0^\infty(\Omega)$.
\end{corollary}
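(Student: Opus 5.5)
The corollary follows by applying Theorem~\ref{thm-minimizer} and Theorem~\ref{thm-Euler-Lagrange} to the integrand
\[
f(x,z):=\frac{1}{p(x)}|z|^{p(x)},\qquad (x,z)\in\mathbb R^d\times\mathbb R^d .
\]
The plan is to verify that this $f$ meets all hypotheses of both theorems. We then compute $\nabla_z f$ and read off the stated weak equation from \eqref{Euler-Lagrange-eq}.

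\textbf{Existence of a minimizer.} Measurability is immediate, since $f$ is continuous in $z$ and measurable in $x$, hence a Carath\'eodory function. For a.e.\ $x$, the map $z\mapsto |z|^{p(x)}$ is continuous, hence lower semicontinuous. It is also convex, being the composition of the convex nondecreasing function $t\mapsto t^{p(x)}/p(x)$ on $[0,\infty)$ with the convex map $z\mapsto |z|$, using $p(x)\ge p^->1$. For the growth condition \eqref{cond-f}, note that
\[
\frac{1}{p^+}|z|^{p(x)}\le f(x,z)\le \frac{1}{p^-}|z|^{p(x)} ,
\]
so we may take $c=1/p^+$, $C=1/p^-$ and $\gamma_1=\gamma_2=0\in L^1(\mathbb R^d)$. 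The remaining hypotheses on $p,s,q,g,\varphi,\Omega$ are assumed verbatim in the corollary. Theorem~\ref{thm-minimizer} therefore yields a minimizer $u\in X^{s,p(\cdot)}_\varphi(\Omega)$ of $F_s$.

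\textbf{Euler--Lagrange equation.} We check differentiability and the derivative bound. Since $p(x)>1$, the map $z\mapsto |z|^{p(x)}/p(x)$ is $C^1$ on $\mathbb R^d$, with
\[
\nabla_z f(x,z)=|z|^{p(x)-2}z .
\]
At $z=0$ the gradient is $0$, and continuity there follows from $|\nabla_z f(x,z)|=|z|^{p(x)-1}\to0$. Hence
\[
|\nabla_z f(x,z)|=|z|^{p(x)-1},
\]
so the required bound holds with $a\equiv0\in L^{p'(\cdot)}(\mathbb R^d)$ and $C_1=1$. Theorem~\ref{thm-Euler-Lagrange} then gives \eqref{Euler-Lagrange-eq}. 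Substituting $\nabla_z f(x,\nabla^s u)=|\nabla^s u|^{p(x)-2}\nabla^s u$ produces exactly the asserted identity for every $v\in C_0^\infty(\Omega)$.

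\textbf{Main obstacle.} The only step needing any care is the $C^1$ regularity at the origin together with joint measurability of $f$, which is routine. Otherwise the argument is a direct verification of hypotheses. If uniqueness were also wanted, it would follow from strict convexity via Remark~\ref{rem-uniqueness}, though the corollary does not ask for it.
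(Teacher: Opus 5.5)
Your proposal is correct and matches the paper's proof essentially step for step. It uses the same integrand $f(x,z)=|z|^{p(x)}/p(x)$, the same constants $c=1/p^+$, $C=1/p^-$, $\gamma_1=\gamma_2\equiv 0$, $a\equiv 0$, $C_1=1$, and the same successive application of Theorem~\ref{thm-minimizer} and Theorem~\ref{thm-Euler-Lagrange}. The only cosmetic difference is that the paper gets Borel measurability from joint continuity of $f$ (since $p\in\mathcal{P}^{\log}(\mathbb R^d)$ is continuous), whereas you invoke the Carath\'eodory property; both are fine here.
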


\begin{proof}
Let $f(x,z):=\frac{|z|^{p(x)}}{p(x)}$, so that $\nabla_zf(x,z)=|z|^{p(x)-2}z$. {We first check that $f$ satisfies the assumptions of Theorem~\ref{thm-minimizer}. Since $p\in\mathcal P^{\log}(\mathbb R^d)$ is continuous, $f$ is jointly continuous on $\mathbb R^d\times\mathbb R^d$, hence Borel measurable and, in particular, lower semicontinuous in $z$; moreover $z\mapsto|z|^{p(x)}$ is convex since $p(x)\ge1$, and
\[
\frac1{p^+}|z|^{p(x)}\le f(x,z)\le\frac1{p^-}|z|^{p(x)},
\]
so \eqref{cond-f} holds with $c=1/p^+$, $C=1/p^-$, $\gamma_1=\gamma_2\equiv0$. By Theorem~\ref{thm-minimizer}, the functional
\[
F_s(u)=\int_{\mathbb R^d}\frac{|\nabla^su(x)|^{p(x)}}{p(x)}\,dx-\int_\Omega gu\,dx
\]
attains a minimizer $u\in X^{s,p(\cdot)}_\varphi(\Omega)$.}
It remains to check the additional hypotheses of Theorem~\ref{thm-Euler-Lagrange}: $f$ is continuously differentiable in $z$, and
\(
|\nabla_zf(x,z)|=|z|^{p(x)-1},
\)
so the growth assumption on $\nabla_zf$ holds with $a\equiv0$ and $C_1=1$. Applying Theorem~\ref{thm-Euler-Lagrange} to the minimizer $u$ obtained above yields
\[
\int_{\mathbb R^d}
|\nabla^su|^{p(x)-2}
\nabla^su\cdot \nabla^sv\,dx
=
\int_\Omega gv\,dx
\]
for every $v\in C_0^\infty(\Omega)$.
\end{proof}

\begin{remark}
\bit
\item The  proof  of Theorem \ref{thm-Euler-Lagrange} differs from the corresponding one in \cite[Thm.~1.18]{SS18} by using  the dominated convergence theorem instead of G\^{a}teaux differentiability.
\item Note that (even in the constant exponent case) this is not to be confused with
existing notions of the fractional $p$-Laplace’s equation from the real interpolation
theory, see e.g. \cite{DCKP16,LL14}.
\item Although the exponent $p(x)$ depends on the spatial variable, no derivatives
of $p$ appear in the Euler--Lagrange equation. Indeed, the first variation is
taken with respect to the variable
\(
z=\nabla^su(x),
\)
while the point $x$ is regarded as a parameter. Consequently,
\(
\nabla_z\!\left(\frac{|z|^{p(x)}}{p(x)}\right)
=
|z|^{p(x)-2}z,
\)
exactly as in the constant exponent case. The dependence of $p$ on $x$
therefore influences only the growth of the operator and not its formal
Euler--Lagrange structure.
\eit
\end{remark}

\textbf{Acknowledgment:}
The authors acknowledge financial support by the Centre for Mathematics of the University of Coimbra (CMUC, https://doi.org/10.54499/UID/00324/2025) under the Portuguese Foundation for Science and Technology (FCT), Grants UID/00324/2025 and UID/PRR/00324/2025.
The second author gratefully acknowledges financial support from
the German Academic Exchange Service (DAAD).
Part of this work was carried out during a workshop supported by the joint AEI--DFG project ``Function Spaces and High-Dimensional Approximation.''


{\textbf{Declaration of AI use:} During the preparation of this work the authors used Claude Opus 5.5 (Anthropic) to improve the language and phrasing, to format and check the references (including DOIs), and to assist with the comparison to the existing literature. All mathematical content was developed and verified by the authors, who take full responsibility for the content of the publication.

\end{document}